\theoremstyle{plain}
\date{}
\theoremstyle{plain}
\newtheorem{thm}{Theorem}[section]
\newtheorem{lem}[thm]{Lemma}
\numberwithin{equation}{section}\theoremstyle{definition}
 \newtheorem{defn}[thm]{Definition}
\numberwithin{equation}{section}
\renewcommand{\to}{\longrightarrow}
\begin{document}
\title[relatively quasi-nonexpansive multivalued mappings ] {  strong convergence of a multi-step iterative process for relatively
 quasi-nonexpansive multivalued mappings and equilibrium problem in Banach spaces}
\title[convergence theorem for multivalued mappings and equilibrium problem ] {  Shrinking projection method for a sequence of relatively
 quasi-nonexpansive multivalued mappings
and equilibrium problem in Banach spaces}

\author[M. Eslamian]{ Mohammad Eslamian }
\date{}

\thanks{ }
\thanks{\it  {\rm  Young Researchers Club, Babol Branch, Islamic Azad
University, Babol, Iran.\\  Email: mhmdeslamian@gmail.com}}
\maketitle

\begin{abstract}
Strong convergence of a new iterative process based on  the Shrinking projection method to a common element of the set of common fixed points of an infinite family
of relatively  quasi-nonexpansive multivalued mappings
  and the solution set of an equilibrium problem in a Banach space is
established. Our results improved and extend the
corresponding results announced by many others.
\end{abstract}\maketitle\noindent {\bf Key words}: Shrinking projection method  , equilibrium problem,  common fixed
point,  relatively  quasi-nonexpansive multivalued mappings.\\
\noindent {\bf 2000 Mathematics Subject Classification}:
47H10,47H09.
\section{Introduction}

A nonempty subset $C$ of a Banach space $E$ is called proximinal if for
each $x\in E$, there exists an element $y\in C$ such that
$$\parallel x-y\parallel=dist(x,C)=inf\{\parallel x-z\parallel: z\in C\}.$$
 We denote by $ N(C),CB(C)$ and
 $P(C)$ the collection of all nonempty subsets, nonempty closed bounded subsets and nonempty
 proximinal bounded subsets of $C,$ respectively. The
Hausdorff metric $H$ on $CB(C)$ is defined by
$$H(A,B):=\max \{\sup_{x\in A}dist(x,B),\sup_{y\in B}dist(y,A)\},$$ for
all $A,B\in CB(C).$\\ Let $T:E\to N(E)$ be a multivalued mapping. An
element $x\in E$ is said to be a fixed point of $T$, if  $x\in Tx$.
The set of fixed points of $T$ will be denoted by $F(T)$.
\begin{defn}
A multivalued mapping $T:E\to CB(E)$ is called
\begin{enumerate}\item[ (i)] nonexpansive if
$$H(Tx,Ty)\le \|x-y\|,\quad x,y\in E.$$\item[ (ii)] quasi-nonexpansive if
$$F(T)\neq \emptyset \qquad and \qquad H(Tx,Tp)\le\|x-p\|, \quad  x\in E,\quad
p\in F(T) .$$
\end{enumerate}\end{defn}
The theory of multivalued
mappings has applications in control theory, convex optimization,
differential equations and economics. Theory of nonexpansive multivalued
 mappings is harder than the corresponding theory of nonexpansive
single valued mappings. Different iterative processes
have been used to approximate fixed points of multivalued
nonexpansive mappings (see [1-7]).

Let $E$ be a real Banach space and let $E^{*}$ be the dual space
of $E$. Let  $C$ be a closed convex subset of  $E$.  Let $F$ be a
bifunction from $C\times C$ into $\mathbb{R}$, where $\mathbb{R}$
is the set of real numbers. The equilibrium problem for $F:C\times
C \to\mathbb{R}$ is to find $\widehat{x}\in C$ such that
$$F(\widehat{x},y)\geq 0, \qquad \forall y\in C.$$ The set of solutions  is
denoted by  $EP(F)$.
 Equilibrium problems,  have had a great
impact and influence in the development of several branches of pure and applied
sciences.  Numerous problems in physics, optimization and economics reduce
to finding a solution of the  equilibrium problem . Some methods have been proposed to
solve the equilibrium problem in a Hilbert space. See [8-10].\par
Let $E$ be a real Banach space with norm $\|.\|$  and let $J$ be
the normalized duality mapping from $E$ into $2^{E^{*}}$ given by
$$Jx=\{x^{*}\in  E^{*}: \langle x,x^{*}\rangle=\|x\|\|x^{*}\|,\|x\|=\|x^{*}\|\}$$  for all $x \in
E$, where $E^{*}$ denotes the dual space of $E$ and
$\langle.,.\rangle$ the generalized duality pairing between $E$
and $E^{*}$.
 As we all know that if $C$ is a nonempty closed convex subset
of a Hilbert space $H$ and $P_{C}:H \to C$ is the metric
projection of $H$ onto $C$, then $P_{C}$ is nonexpansive. This
fact actually characterizes Hilbert spaces and consequently, it
is not available in more general Banach spaces. In this
connection, Alber \cite{alber} recently introduced a generalized
projection operator $E^{*}$  in a smooth  Banach space $E$ which is an
analogue of the metric projection in Hilbert spaces. Consider the
functional defined by $$\phi(x,y)=\|x\|^{2}-2\langle
x,Jy\rangle+\|y\|^{2},\qquad x,y\in E.$$ Observe that, in a
Hilbert space H, $\phi(x,y)$ reduces to $\|x-y\|^{2}.$
The generalized projection $\Pi_{C}:E\to C$  is a map that
assigns to an arbitrary point $x\in E$ the minimum point of the
functional $\phi(x,y)$ that is, $\Pi_{C}x=\overline{x}$, where
$\overline{x}$ is the solution to the minimization problem
$$\phi(\overline{x},x)=\inf_{y\in C}\phi(y,x)$$  The existence and uniqueness of the
operator $\Pi_{C}$  follows from the properties of the functional
$\phi(x,y)$ and strict monotonicity of the mapping $J$ (see, for
example, \cite{alber, kt,cio}). In Hilbert spaces,
$\Pi_{C}=P_{C}$. It is obvious from the definition of function
$\phi$ that
\begin{equation}(\|y\|-\|x\|)^{2}\leq \phi(x,y)\leq (\|y\|+\|x\|)^{2}\qquad
\forall x,y\in E.\end{equation} \maketitle\noindent {\bf{Remark
1:}} If $E$ is a reflexive, strictly convex and smooth Banach
space, then for $x,y\in E$, $\phi(x,y)=0$ if and only if $x=y$
(see \cite{cio, tak1}).

Let $C$ be a  nonempty closed convex subset of a smooth, strictly
convex and reflexive Banach space $E$, and let $T$ be a mapping from
$C$ into itself. We denote by $F(T)$ the set of fixed points of $T$.
A point $p\in C$ is said to be an asymptotic \cite{brz} fixed point
of $T$, if $C$ contains a sequence $\{x_{n}\}$ which converges
weakly to $p$ such that $\lim_{n\to\infty}\|x_{n}-Tx_{n}\|=0$. The
set of asymptotic fixed points of $T$ will be denoted by
$\widetilde{ F(T)}$. A mapping $T$ is said to be relatively
nonexpansive \cite{mt0,mt00}, if $\widetilde{ F(T)} =F(T)$ and $\phi
(p,Tx)\leq \phi(p,x)$ for all $x\in C$ and $p\in F(T)$. $T$ is said
to be relatively quasi-nonexpansive (\cite{rr1,rr2}) if $F(T)\neq
\emptyset$ and $\phi (p,Tx)\leq \phi(p,x)$ for all $x\in C$ and
$p\in F(T)$. The class of relatively quasi-nonexpansive mappings is
bigger than the class of relatively nonexpansive mappings which
requires the strong restriction: $\widetilde{ F(T)} =F(T).$

In the recent years, approximation of fixed points of relatively quasi-nonexpansive
 mappings by iteration has been studied by many
authors, see [16-23].

Very recently, Eslamian and Abkar \cite{ea} introduce the relatively  quasi-nonexpansive multivalued mapping as follows:
  \begin{defn} Let $C$ be a closed convex subset of a smooth
Banach space $E$, and $T:C\to N(C)$ be a multivalued mapping. We set
$$\Phi(Tx,Tp)=max\{\sup_{q\in Tp}\inf_{y\in Tx}\phi(y,q),\sup_{y\in Tx}\inf_{q\in
Tp} \phi(y,q)\}.$$ We call $T$ is relatively quasi-nonexpansive
multivalued mapping if $F(T)\neq \emptyset$ and
$$\Phi(Tx,Tp)\leq \phi(x,p),\qquad \forall p\in F(T),\quad
\forall x\in C.$$
\end{defn}
\noindent {\bf Remark }: In a Hilbert space,
 $\Phi(Tx,Ty)=H(Tx,Ty)^{2}$, and hence relatively quasi-nonexpansivness is
 equivalent to quasi-nonexpansivness.\par
In \cite{ea} the author  presented some example of relatively quasi-nonexpansive multivalued mapping.
Another generalization of relatively nonexpansive mapping to multivalued mappings presented by Simin Homaeipour \cite{h} as follows.
\begin{defn} Let $C$ be
a closed convex subset of a smooth Banach space $E$, and $T:C\to
N(C)$ be a multivalued mapping, $T$ is called relatively
nonexpansive mapping if the following conditions are satisfied:\\
(i) $F(T)\neq \emptyset$,\\(ii) $\phi (p,z)\leq \phi(p,x)$ for
all $z\in Tx$, $x\in C$  and $p\in F(T)$,\\(iii) $\widetilde{
F(T)} =F(T).$
\end{defn}
In a Hilbert space $H$, condition (ii) is equivalent to
$$\|p-z\|\leq\|p-x\|,\qquad \forall z\in Tx,\quad  \forall x\in C, \quad \forall p\in F(T).$$
Now if put $H=\mathbb{R}$ and $Tx=[0,x]$, we observe that $T$ is a nonexpansive multivalued mapping but $T$ is not 
relatively nonexpansive  mapping. Hence in spite of single valued case, relatively nonexpansive multivalued mapping is not equivalent to quasi-nonexpansive multivalued mapping.

\begin{defn} A multivalued mapping  $T$ is called closed if $x_{n}\to w$ and
$\lim_{n\to\infty}dist(x_{n},Tx_{n})=0$, then $w\in T(w).$
\end{defn}
In \cite{ea}, Eslamian and Abkar proved the following theorem.
\begin{thm}
Let $E$ be a uniformly smooth and uniformly convex Banach space, and
let $C$ be a nonempty closed convex subset of $E$. Let $F$ be a
bifunction from $C\times C$ into $\mathbb{R}$ satisfying
$(A1)-(A4).$ Let $T_{i}:C\to P(C)$, $ i=1,2,...,m$,  be a finite
family of multivalued mappings such that $P_{T_{i}}$ is closed and
relatively quasi-nonexpansive. Assume that $\mathcal
{F}=\bigcap_{i=1}^{m}F(T_{i})\bigcap EP(f)\neq\emptyset.$ For
$x_{0}\in C$ and $C_{0}=C$, let $\{x_{n}\}$ be a sequences generated
by  the following algorithm:
$$\begin {cases}
y_{n,1}=J^{-1}((1-a_{n,1})Jx_{n}+a_{n,1}Jz_{n,1}),\\
y_{n,2}=J^{-1}((1-a_{n,2})Jx_{n}+a_{n,2}Jz_{n,2}),\\...\\
y_{n,m}=J^{-1}((1-a_{n,m})Jx_{n}+a_{n,m}Jz_{n,m}),
\\u_{n}\in C\, such\, that\,
f(u_{n},y)+\frac{1}{r_{n}}\langle
y-u_{n},Ju_{n}-Jy_{n,m}\rangle\geq0 ,\qquad \forall y\in C,\\
C_{n+1}=\{z\in C_{n}: \phi(z,u_{n})\leq \phi(z,x_{n})\},\\
x_{n+1}=\prod_{C_{n+1}}x_{0}, \qquad \forall n\geq 0\end{cases}
$$ where   $z_{n,1}\in P_{T_{1}}x_{n}$ and  $z_{n,i}\in P_{T_{i}}y_{n,i-1}$ for $i=2,...,m$ and  $J$ is
 the duality mapping on $E$. Assume that $\sum_{i=1}^{m} a_{n,i}=1$, $\{a_{n,i}\}\in [a,b]\subset (0,1)$
 and $\{r_{n}\}\subset[c,\infty)$ for some $c>0$. Suppose that  $P_{T_{i}}$ is uniformly
 continuous with respect to the Hausdorff metric for $i=2,3,...,m$.
 Then $\{x_{n}\}$ converges strongly
 to $\Pi _{\mathcal{F}}x_{0}.$
\end{thm}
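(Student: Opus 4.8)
The plan is to follow the standard analysis of the shrinking projection method, adapted to the multivalued and equilibrium setting. First I would check that everything is well defined. Expanding $\phi$ shows that $\phi(z,u_n)\le\phi(z,x_n)$ is equivalent to the linear inequality $2\langle z,Jx_n-Ju_n\rangle\le\|x_n\|^2-\|u_n\|^2$, so each $C_{n+1}$ is the intersection of $C_n$ with a half-space; by induction every $C_n$ is closed and convex, the generalized projections $\Pi_{C_{n+1}}x_0$ exist, and $\{x_n\}$ is well defined. I would then prove $\mathcal F\subset C_n$ for all $n$ by induction. Fix $p\in\mathcal F$ and write $T_{r_n}$ for the resolvent of the bifunction, so that $u_n=T_{r_n}y_{n,m}$; the standard resolvent lemma gives $\phi(p,u_n)\le\phi(p,y_{n,m})$. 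Combining the convexity estimate $\phi(p,J^{-1}(\lambda Ja+(1-\lambda)Jb))\le\lambda\phi(p,a)+(1-\lambda)\phi(p,b)$ with the relatively quasi-nonexpansive inequality $\phi(p,z_{n,i})\le\phi(p,y_{n,i-1})$ (with $y_{n,0}:=x_n$) yields $\phi(p,y_{n,i})\le\phi(p,x_n)$ for every $i$, hence $\phi(p,u_n)\le\phi(p,x_n)$ and $p\in C_{n+1}$. Since $\mathcal F$ is an intersection of fixed point sets of closed maps and of $EP(f)$, it is closed and convex, so $\Pi_{\mathcal F}x_0$ is meaningful.

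Next I would establish strong convergence of $\{x_n\}$. Because $x_n=\Pi_{C_n}x_0$ and $\mathcal F\subset C_n$, the three-point property of the generalized projection gives $\phi(x_n,x_0)\le\phi(p,x_0)$, so $\{\phi(x_n,x_0)\}$ is bounded; since $C_{n+1}\subset C_n$ it is also nondecreasing, hence convergent. The inequality $\phi(x_{n+1},x_n)\le\phi(x_{n+1},x_0)-\phi(x_n,x_0)$ then forces $\phi(x_{n+1},x_n)\to0$, and the Kamimura--Takahashi lemma (valid as $E$ is uniformly convex and uniformly smooth) gives $\|x_{n+1}-x_n\|\to0$; thus $\{x_n\}$ is Cauchy and $x_n\to w$ for some $w\in C$. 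From $x_{n+1}\in C_{n+1}$ we get $\phi(x_{n+1},u_n)\le\phi(x_{n+1},x_n)\to0$, so $\|x_{n+1}-u_n\|\to0$ and $u_n\to w$ as well.

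Then I would identify $w$ as a common element. Since $x_n\to w$ and $u_n\to w$, we have $\phi(p,x_n)-\phi(p,u_n)\to0$; together with $\phi(p,u_n)\le\phi(p,y_{n,m})\le\phi(p,x_n)$ this gives $\phi(p,y_{n,m})-\phi(p,u_n)\to0$, whence the firm-nonexpansivity estimate $\phi(u_n,y_{n,m})\le\phi(p,y_{n,m})-\phi(p,u_n)\to0$ and $\|u_n-y_{n,m}\|\to0$, so $y_{n,m}\to w$; the usual argument using $(A1)-(A4)$ and the defining inequality of $u_n$ then shows $w\in EP(f)$. For the fixed points I would sharpen the earlier bound with a strictly increasing convex function $g$, $g(0)=0$, arising from the uniform convexity of $E^{*}$: after $\phi(p,z_{n,i})\le\phi(p,y_{n,i-1})\le\phi(p,x_n)$ one obtains $\phi(p,y_{n,i})\le(1-a_{n,i})\phi(p,x_n)+a_{n,i}\phi(p,y_{n,i-1})-a_{n,i}(1-a_{n,i})\,g(\|Jx_n-Jz_{n,i}\|)$. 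Telescoping these relations through the chain and using $a_{n,i}\in[a,b]$ gives $\sum_i c_i\,g(\|Jx_n-Jz_{n,i}\|)\le\phi(p,x_n)-\phi(p,u_n)\to0$ with constants $c_i>0$, so $\|Jx_n-Jz_{n,i}\|\to0$ and hence $\|x_n-z_{n,i}\|\to0$ for every $i$; consequently $y_{n,i}\to w$ for all $i$. Since $z_{n,i}\in P_{T_i}y_{n,i-1}$, we get $dist(y_{n,i-1},T_iy_{n,i-1})=\|y_{n,i-1}-z_{n,i}\|\to0$ with $y_{n,i-1}\to w$, and closedness of $P_{T_i}$ yields $w\in F(T_i)$; for $i\ge2$ the uniform continuity of $P_{T_i}$ with respect to the Hausdorff metric is what secures $dist(z_{n,i},T_iw)\le H(T_iy_{n,i-1},T_iw)\to0$, which is the form in which that hypothesis enters. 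Hence $w\in\mathcal F$.

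Finally, since $x_{n+1}=\Pi_{C_{n+1}}x_0$ and $\mathcal F\subset C_{n+1}$, the variational characterization of the generalized projection gives $\langle x_{n+1}-p,Jx_0-Jx_{n+1}\rangle\ge0$ for all $p\in\mathcal F$; letting $n\to\infty$ yields $\langle w-p,Jx_0-Jw\rangle\ge0$, which is exactly the characterization $w=\Pi_{\mathcal F}x_0$. I expect the main obstacle to be the fixed point step: converting the scalar estimates $\|x_n-z_{n,i}\|\to0$ into the memberships $w\in T_iw$ for the set-valued, nested iterates, where the interplay of closedness, the relatively quasi-nonexpansive inequality (whose asymmetry in the two arguments of $\phi$ must be respected), and the uniform continuity assumption for $i\ge2$ is the delicate point.
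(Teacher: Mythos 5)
Your overall route is the same one this paper uses for its own, more general Theorem 3.1 (note the statement you proved is quoted from \cite{ea}; this paper does not reprove it, so the natural comparison is with the proof of Theorem 3.1): show $\mathcal F\subset C_n$ by induction, get monotonicity and boundedness of $\phi(x_n,x_0)$, extract a strong limit $w$, identify $w$ as a common fixed point via a Lemma \ref{4}-type estimate plus closedness, handle $EP(f)$ by the resolvent argument, and finish with Lemma \ref{2}. The genuinely new work required by the finite multi-step scheme --- telescoping the two-term estimates $\phi(p,y_{n,i})\le(1-a_{n,i})\phi(p,x_n)+a_{n,i}\phi(p,y_{n,i-1})-a_{n,i}(1-a_{n,i})g(\|Jx_n-Jz_{n,i}\|)$ through the chain --- you do correctly: since $a_{n,i}\in[a,b]\subset(0,1)$, each term $g(\|Jx_n-Jz_{n,i}\|)$ survives with a coefficient at least $a^{m-i+1}(1-b)>0$, so $\|Jx_n-Jz_{n,i}\|\to0$ for every $i$, whence $y_{n,i}\to w$ and closedness of $P_{T_i}$ applied along the sequence $\{y_{n,i-1}\}_n$ gives $w\in F(T_i)$. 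Two side remarks: (a) your argument in fact never needs the uniform-continuity hypothesis for $i\ge 2$ (the closedness-along-$y_{n,i-1}$ step already suffices; your sentence invoking $H(T_iy_{n,i-1},T_iw)$ is an alternative, not a necessity) --- this is consistent with the paper's own Remark that its Theorem 3.1 removes that hypothesis; (b) convexity of $\mathcal F$ does not follow from closedness of the mappings, as you assert --- it is Lemma \ref{8} together with Lemma \ref{6}(iv) that makes $\Pi_{\mathcal F}x_0$ meaningful.

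There is, however, one genuine gap: your proof that $\{x_n\}$ converges. You establish only $\phi(x_{n+1},x_n)\to0$, hence $\|x_{n+1}-x_n\|\to0$, and conclude ``thus $\{x_n\}$ is Cauchy.'' That inference is invalid: vanishing of consecutive differences does not imply the Cauchy property (partial sums of the harmonic series are the standard counterexample), and nothing else in your text closes this hole. The repair is exactly the paper's argument: for arbitrary $m\ge n$ one has $x_m\in C_m\subset C_n$ and $x_n=\Pi_{C_n}x_0$, so Lemma \ref{3} gives $\phi(x_m,x_n)\le\phi(x_m,x_0)-\phi(x_n,x_0)$; since $\{\phi(x_n,x_0)\}$ was shown to be convergent, the right-hand side tends to $0$ as $m,n\to\infty$, and Lemma \ref{1} then yields $\|x_m-x_n\|\to0$ as $m,n\to\infty$, i.e.\ $\{x_n\}$ is Cauchy. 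With this two-index version substituted for your one-index version --- it uses only the tools you already invoke --- the rest of your argument goes through.
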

In this paper, we introduce a new shirking  projection algorithm for finding
a common element of the set of common fixed points of an infinite
family of relatively quasi-nonexpansive multivalued mappings and
the set of solutions of an equilibrium problem in uniformly smooth and uniformly convex Banach spaces.
Strong convergence to common elements of two set is established. Our results improved and extend the corresponding
results announced by many others.
 \section{Preliminaries}
  A Banach space $E$ is said to be strictly convex if
$\|\frac{x+y}{2}\|<1$ for all $x,y\in E$ with $\|x\|=\|y\|=1$ and
$x\neq y$ . It said to be uniformly convex if
$\lim_{n\to\infty}\|x_{n}-y_{n}\|=0$ for any two sequences
$\{x_{n}\}$ and $\{y_{n}\}$ in $E$ such that
$\|x_{n}\|=\|y_{n}\|=1$ and
$\lim_{n\to\infty}\|\frac{x_{n}+y_{n}}{2}\|=1$. Let $U=\{x\in
E:\|x\|=1\}$ be the unit sphere of $E$. Then the Banach space $E$
is said to be smooth provided $$\lim_{t\to
0}\frac{\|x+ty\|-\|x\|}{t}$$ exists for each $x,y\in U.$ It is
also said to be uniformly smooth if the limit is attained
uniformly for $x,y\in E.$ It is well known that if $E^{*}$ is
uniformly convex, then $J$ is uniformly continuous on bounded
subsets of $E$.

\begin{lem}(\cite{kt})\label{1}
Let $E$ be a uniformly convex and smooth Banach space and let
$\{x_{n}\}$ and $\{y_{n}\}$ be two sequences in $E.$ If
$\phi(x_{n},y_{n})\to 0$ and either $\{x_{n}\}$ or $\{y_{n}\}$ is
bounded, then $x_{n}-y_{n}\to 0.$
\end{lem}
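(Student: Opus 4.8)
The plan is to reduce everything to the definition of uniform convexity via a normalization argument. First I would exploit the elementary two-sided estimate recorded in the introduction: since $(\|x_n\|-\|y_n\|)^2\le\phi(x_n,y_n)\to 0$, we immediately get $\|x_n\|-\|y_n\|\to 0$, and because one of the two sequences is bounded this forces both to be bounded, say $\|x_n\|,\|y_n\|\le M$ for all $n$.

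Next I would argue by contradiction. Suppose $x_n-y_n\not\to 0$; then after passing to a subsequence there is $\varepsilon>0$ with $\|x_n-y_n\|\ge\varepsilon$. Observe first that the norms cannot degenerate: if $\|y_n\|\to 0$ along a further subsequence then $\|x_n\|\to 0$ too (because $\|x_n\|-\|y_n\|\to 0$), giving $\|x_n-y_n\|\to 0$, a contradiction; hence, after a further passage to a subsequence, $\|y_n\|\ge\delta>0$ and likewise $\|x_n\|\ge\delta/2$. I then rewrite $\phi$ using $\|Jy\|=\|y\|$ and $\langle x,Jy\rangle\le\|x\|\|y\|$. Since $\phi(x_n,y_n)=\|x_n\|^2+\|y_n\|^2-2\langle x_n,Jy_n\rangle\to 0$ while $(\|x_n\|-\|y_n\|)^2\to 0$, subtracting the two limits gives $\langle x_n,Jy_n\rangle-\|x_n\|\|y_n\|\to 0$.

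The heart of the argument is the normalization. Setting $u_n=x_n/\|x_n\|$ and $v_n=y_n/\|y_n\|$ and using the positive homogeneity of $J$ (so that $Jy_n=\|y_n\|Jv_n$ and $\|Jv_n\|=1$), the previous limit becomes $\langle u_n,Jv_n\rangle\to 1$. Consequently $\langle \tfrac{u_n+v_n}{2},Jv_n\rangle=\tfrac12(\langle u_n,Jv_n\rangle+1)\to 1$, and since $\langle\tfrac{u_n+v_n}{2},Jv_n\rangle\le\|\tfrac{u_n+v_n}{2}\|\,\|Jv_n\|\le 1$, we conclude $\|\tfrac{u_n+v_n}{2}\|\to 1$. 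As $\|u_n\|=\|v_n\|=1$, the definition of uniform convexity now yields $\|u_n-v_n\|\to 0$. Finally, writing $x_n-y_n=\|x_n\|(u_n-v_n)+(\|x_n\|-\|y_n\|)v_n$ and using boundedness of $\|x_n\|$ together with $\|x_n\|-\|y_n\|\to 0$, both terms tend to $0$, so $\|x_n-y_n\|\to 0$, contradicting $\|x_n-y_n\|\ge\varepsilon$.

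I expect the main obstacle to be the degenerate case in which the norms tend to $0$, which is exactly why the contradiction is organized through subsequences and a preliminary lower bound $\|y_n\|\ge\delta$: the normalization $u_n,v_n$ is meaningful only once the sequences are bounded away from the origin. Smoothness of $E$ is needed here precisely to guarantee that $J$ is single-valued, so that the duality-pairing manipulations are unambiguous, while uniform convexity enters only in the final step. The remaining steps are routine.
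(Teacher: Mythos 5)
Your proof is correct, and it is essentially the proof of the cited source: the paper itself does not prove this lemma but quotes it from Kamimura and Takahashi \cite{kt}, whose argument likewise deduces boundedness of both sequences from $(\|x_n\|-\|y_n\|)^2\le\phi(x_n,y_n)$, isolates the term $\|x_n\|\|y_n\|-\langle x_n,Jy_n\rangle\to 0$, disposes of the degenerate case $\|y_n\|\to 0$ along subsequences, and then passes to the unit vectors $u_n,v_n$ so that the sequential definition of uniform convexity (exactly as stated in the paper's preliminaries) yields $u_n-v_n\to 0$. Your handling of the two subtle points---keeping the norms bounded away from zero before normalizing, and using single-valuedness and positive homogeneity of $J$ in the smooth space---is what makes the argument go through, so nothing is missing.
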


\begin{lem}(\cite{alber})\label{2}
Let $C$ be a  nonempty closed convex subset of a  smooth Banach
space $E$ and  $x\in E$. Then $x_{0}=\Pi_{C}x$ if and only if
$$\langle x_{0}-y,Jx-Jx_{0}\rangle\geq 0,\qquad \forall y\in C.$$
\end{lem}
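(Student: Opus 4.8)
The plan is to read $x_0=\Pi_C x$ as the assertion that $x_0$ minimizes the convex functional $y\mapsto\phi(y,x)$ over $C$, and to convert this minimality into the stated variational inequality by a first-order optimality argument. The algebraic backbone for both implications is the three-point identity
$$\phi(y,x)=\phi(y,x_0)+\phi(x_0,x)+2\langle y-x_0,Jx_0-Jx\rangle,\qquad y\in E,$$
which one obtains by expanding each term through the definition $\phi(a,b)=\|a\|^2-2\langle a,Jb\rangle+\|b\|^2$ and using $\langle x_0,Jx_0\rangle=\|x_0\|^2$.

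First I would prove necessity. Fix $y\in C$; by convexity of $C$ the point $x_0+t(y-x_0)=(1-t)x_0+ty$ belongs to $C$ for every $t\in(0,1]$, so minimality of $x_0$ gives $h(t):=\phi\big(x_0+t(y-x_0),x\big)\ge h(0)=\phi(x_0,x)$, whence $\frac{h(t)-h(0)}{t}\ge 0$ and, letting $t\to 0^{+}$, $h'(0^{+})\ge 0$. Because $E$ is smooth, $J$ is single-valued and $\tfrac12\|\cdot\|^2$ is G\^ateaux differentiable with gradient $J$, so $\frac{d}{dt}\|x_0+tv\|^2=2\langle v,J(x_0+tv)\rangle$; applying this with $v=y-x_0$ and evaluating at $t=0$ yields $h'(0)=2\langle y-x_0,Jx_0-Jx\rangle$. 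The inequality $h'(0)\ge 0$ is exactly $\langle x_0-y,Jx-Jx_0\rangle\ge 0$, since negating both entries of the pairing leaves it unchanged.

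For sufficiency no differentiation is needed: assuming $\langle x_0-y,Jx-Jx_0\rangle\ge 0$ for all $y\in C$, I substitute into the three-point identity, noting that $2\langle y-x_0,Jx_0-Jx\rangle=2\langle x_0-y,Jx-Jx_0\rangle\ge 0$ by hypothesis and that $\phi(y,x_0)\ge 0$ (being bounded below by $(\|y\|-\|x_0\|)^2$). This gives $\phi(y,x)\ge\phi(x_0,x)$ for every $y\in C$, so $x_0$ is a minimizer of $\phi(\cdot,x)$ on $C$; by the uniqueness of the generalized projection noted above, $x_0=\Pi_C x$. The one delicate point I expect is the smooth-space differentiation in the necessity half, namely certifying that $t\mapsto\|x_0+t(y-x_0)\|^2$ is differentiable at $0$ with its derivative expressed through the single-valued duality map and that the one-sided limit $h'(0^{+})$ may be identified with $h'(0)$; everything else is purely algebraic and rests only on the definition of $\phi$ and its nonnegativity.
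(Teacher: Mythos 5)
The paper offers no proof of this lemma; it is quoted directly from Alber \cite{alber} (see also Kamimura--Takahashi \cite{kt}), so there is no internal argument to compare against. Your proof is correct and is essentially the classical one from that literature: the three-point identity $\phi(y,x)=\phi(y,x_0)+\phi(x_0,x)+2\langle y-x_0,Jx_0-Jx\rangle$ checks out by direct expansion using $\langle x_0,Jx_0\rangle=\|x_0\|^2$; the necessity half is the standard first-order optimality computation, legitimate because in a smooth space $\tfrac12\|\cdot\|^2$ is G\^ateaux differentiable with gradient $J$, so the one-sided derivative $h'(0^{+})$ forced to be nonnegative by minimality coincides with the two-sided derivative $2\langle y-x_0,Jx_0-Jx\rangle$; and the sufficiency half follows from the same identity together with $\phi(y,x_0)\geq 0$. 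Two points you should make explicit rather than leave tacit: (i) the hypothesis $x_0\in C$ is implicit in the statement and is needed in both directions (in particular so that the segment $(1-t)x_0+ty$ lies in $C$); (ii) your last step in the sufficiency direction --- passing from ``$x_0$ minimizes $\phi(\cdot,x)$ over $C$'' to ``$x_0=\Pi_Cx$'' --- invokes uniqueness of the minimizer, which rests on the reflexivity and strict convexity under which $\Pi_C$ is well defined (as the paper's introduction notes); smoothness alone, as literally assumed in the lemma, would not give this, but that looseness is in the paper's statement, not a defect of your argument.
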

\begin{lem}(\cite{alber})\label{3}
Let $E$ be a reflexive, strictly convex and smooth Banach space. Let
$C$ be a nonempty closed convex subset of $E$ and let $x\in E$. Then
$$\phi(y,\Pi_{C}x)+\phi(\Pi_{C}x,x)\leq \phi(y,x),\qquad \forall y\in
C.$$
\end{lem}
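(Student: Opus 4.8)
The plan is to reduce the claimed inequality to the variational characterization of the generalized projection supplied by Lemma \ref{2}. Write $x_{0}=\Pi_{C}x$, so that $x_{0}\in C$ and, by Lemma \ref{2}, $\langle x_{0}-y,Jx-Jx_{0}\rangle\geq 0$ for every $y\in C$. The key observation is that the difference $\phi(y,x_{0})+\phi(x_{0},x)-\phi(y,x)$ is not merely bounded above by zero but is in fact exactly a multiple of this inner product, so the inequality will follow at once.

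First I would expand each of the three functionals using $\phi(a,b)=\|a\|^{2}-2\langle a,Jb\rangle+\|b\|^{2}$. Summing the expansions of $\phi(y,x_{0})$ and $\phi(x_{0},x)$ and subtracting that of $\phi(y,x)$, the terms $\|y\|^{2}$ and $\|x\|^{2}$ cancel and the surviving terms collect into
$$\phi(y,x_{0})+\phi(x_{0},x)-\phi(y,x)=2\big(\|x_{0}\|^{2}-\langle y,Jx_{0}\rangle-\langle x_{0},Jx\rangle+\langle y,Jx\rangle\big).$$
Using $\|x_{0}\|^{2}=\langle x_{0},Jx_{0}\rangle$ and regrouping the four pairings as $\langle x_{0}-y,Jx_{0}\rangle-\langle x_{0}-y,Jx\rangle$, the right-hand side becomes $2\langle x_{0}-y,Jx_{0}-Jx\rangle=-2\langle x_{0}-y,Jx-Jx_{0}\rangle$.

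Finally, applying the characterization of Lemma \ref{2} gives $\langle x_{0}-y,Jx-Jx_{0}\rangle\geq 0$, whence $\phi(y,x_{0})+\phi(x_{0},x)-\phi(y,x)\leq 0$, which is precisely the asserted inequality. There is no serious obstacle here: the whole content is the exact identity $\phi(y,\Pi_{C}x)+\phi(\Pi_{C}x,x)-\phi(y,x)=-2\langle \Pi_{C}x-y,Jx-J\Pi_{C}x\rangle$, after which the sign is handed to us by Lemma \ref{2}. The only places the hypotheses enter are in guaranteeing that $\Pi_{C}x$ is well defined and single-valued (for which $E$ reflexive, strictly convex and smooth suffices, as recorded earlier) and that Lemma \ref{2} is applicable.
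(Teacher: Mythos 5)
Your proof is correct. The paper itself offers no proof of this lemma (it is quoted from Alber's work, reference \cite{alber}), but your derivation --- expanding the three functionals, cancelling $\|y\|^{2}$ and $\|x\|^{2}$, using $\langle x_{0},Jx_{0}\rangle=\|x_{0}\|^{2}$ to collect the remainder into the exact identity $\phi(y,x_{0})+\phi(x_{0},x)-\phi(y,x)=-2\langle x_{0}-y,Jx-Jx_{0}\rangle$, and then invoking the variational characterization of Lemma \ref{2} --- is precisely the standard argument by which this inequality is established in the cited literature, so nothing needs to be added.
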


\begin{lem}( \cite{ckw})\label{4}
Let $E$ be a uniformly convex Banach
space and let $B_{r}(0)=\{x\in E: \parallel x\parallel\leq r\}$,
for $r>0$.
 Then, for any given
sequence $\{x_{n}\}_{n=1}^{\infty}\subset B_{r}(0)$
 and for any given sequence $\{a_{n}\}_{n=1}^{\infty}$ of positive
numbers with $\sum_{n=1}^{\infty}a_{n}=1$
there exists a continuous, strictly increasing and
convex function $g:[0,\infty)\to [0,\infty)$ with
$g(0)=0$ such that  that for any positive
integers $i, j$ with $i < j$,
$$\|\sum_{n=1}^{\infty}a_{n}x_{n}\|^{2}\leq \sum _{n=1}^{\infty}a_{n}\|x_{n}\|^{2}-a_{i}a_{j} g(\|x_{i}-x_{j}\|.$$
\end{lem}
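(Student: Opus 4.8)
The plan is to reduce the infinite convex combination to the two--element case and then invoke the classical uniform--convexity inequality of Xu. Recall that since $E$ is uniformly convex, for the fixed $r>0$ there is a continuous, strictly increasing, convex function $g:[0,\infty)\to[0,\infty)$ with $g(0)=0$ such that
\[
\|\lambda x+(1-\lambda)y\|^{2}\leq \lambda\|x\|^{2}+(1-\lambda)\|y\|^{2}-\lambda(1-\lambda)g(\|x-y\|)
\]
for all $x,y\in B_{r}(0)$ and all $\lambda\in[0,1]$. This two--point estimate, a standard consequence of the modulus of convexity, is the analytic engine of the proof, and the very same $g$ (depending only on $r$, not on $\lambda$) will serve as the function claimed in the lemma. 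So the first step is simply to record this inequality.

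The second step is the grouping device that isolates the pair $(i,j)$. Fix $i<j$ and write $c=a_{i}+a_{j}\in(0,1]$. Since each $x_{n}\in B_{r}(0)$ and $\sum_{n}a_{n}=1$, the series $\sum_{n}a_{n}x_{n}$ converges absolutely and its sum lies in $B_{r}(0)$; likewise the convex combination $u=\tfrac{1}{c}\bigl(a_{i}x_{i}+a_{j}x_{j}\bigr)$ lies in $B_{r}(0)$. I would then regard $\sum_{n}a_{n}x_{n}=c\,u+\sum_{n\neq i,j}a_{n}x_{n}$ as a convex combination with weights $c$ and $\{a_{n}\}_{n\neq i,j}$ summing to $1$, and apply Jensen's inequality for the convex continuous functional $x\mapsto\|x\|^{2}$ (valid for the infinite combination by truncation and continuity) to obtain
\[
\Bigl\|\sum_{n}a_{n}x_{n}\Bigr\|^{2}\leq c\,\|u\|^{2}+\sum_{n\neq i,j}a_{n}\|x_{n}\|^{2}.
\]

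In the third step I would estimate $\|u\|^{2}$ by applying the two--point inequality with $\lambda=a_{i}/c$, so that $1-\lambda=a_{j}/c$. This gives $c\|u\|^{2}\leq a_{i}\|x_{i}\|^{2}+a_{j}\|x_{j}\|^{2}-\tfrac{a_{i}a_{j}}{c}\,g(\|x_{i}-x_{j}\|)$. Substituting back and recombining all the squared norms produces
\[
\Bigl\|\sum_{n}a_{n}x_{n}\Bigr\|^{2}\leq \sum_{n}a_{n}\|x_{n}\|^{2}-\frac{a_{i}a_{j}}{c}\,g(\|x_{i}-x_{j}\|).
\]
Finally, because $c=a_{i}+a_{j}\leq\sum_{n}a_{n}=1$, we have $\tfrac{a_{i}a_{j}}{c}\geq a_{i}a_{j}$, and subtracting the larger quantity only strengthens the bound, so the stated inequality with coefficient $a_{i}a_{j}$ follows. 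The main technical point to be careful about is the passage to the infinite sum: justifying the infinite--dimensional Jensen step and the absolute convergence of $\sum_{n}a_{n}x_{n}$ and of $\sum_{n}a_{n}\|x_{n}\|^{2}$, all of which rest on the uniform bound $\|x_{n}\|\leq r$ together with summability of $\{a_{n}\}$; everything else is the two--term estimate plus bookkeeping.
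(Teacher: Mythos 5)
Your proof is correct: Xu's two-point inequality for uniformly convex spaces (with coefficient $W_{2}(\lambda)=\lambda(1-\lambda)^{2}+\lambda^{2}(1-\lambda)=\lambda(1-\lambda)$ when $p=2$), the grouping $\sum_{n}a_{n}x_{n}=c\,u+\sum_{n\neq i,j}a_{n}x_{n}$ with $u=\frac{1}{c}(a_{i}x_{i}+a_{j}x_{j})$ and $c=a_{i}+a_{j}$, the truncation-and-continuity justification of Jensen's inequality for the infinite convex combination, and the final observation that $\frac{a_{i}a_{j}}{c}\geq a_{i}a_{j}$ since $c\leq 1$ are all sound. Note that the paper itself gives no proof of this lemma --- it is quoted from \cite{ckw}, where the argument is essentially the same reduction to the two-point case --- and your version in fact yields slightly more than what is stated, since your $g$ depends only on $r$ and the space $E$, not on the particular sequences $\{x_{n}\}$, $\{a_{n}\}$ or on the indices $i,j$.
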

For solving the equilibrium problem, we assume that the
bifunction  $F$ satisfies the following conditions:
\begin{enumerate}\item[ (A1)] $F(x,x)=0$ for all $x\in C$,
\item[ (A2)]$F$ is monotone, i.e. $F(x,y)+F(y,x)\leq 0$
for any $x,y\in C,$
\item[ (A3)]$F$ is upper-hemicontinuous, i.e. for each
$x,y,z\in C$,
$$ \limsup _{t\to 0^{+}} F(tz+(1-t)x,y)\leq F(x,y)$$
\item[(A4)] $F(x,.)$ is convex and lower semicontinuous for each $x\in C.$
\end{enumerate}
The following lemma was proved in \cite{r1}.
\begin{lem}\label{5}
Let $C$ be a nonempty closed convex subset of  a smooth, strictly
convex, and reflexive Banach space $E$ ,  let $F$ be a bifunction
of $C\times C$ into $\mathbb{R}$ satisfying $(A1)-(A4).$ Let
$r>0$ and $x\in E$. Then, there exists $z\in C$ such that
$$F(z,y)+\frac{1}{r}\langle y-z,Jz-Jx\rangle\geq0 \qquad \forall
y\in C.$$
\end{lem}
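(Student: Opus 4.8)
The plan is to realise the desired $z$ as a solution of the equilibrium problem attached to the regularized bifunction
$$G(u,y) := F(u,y) + \frac{1}{r}\langle y-u, Ju-Jx\rangle, \qquad u,y\in C,$$
so that the conclusion becomes the single assertion $EP(G)\neq\emptyset$. First I would record that $G$ inherits most of $(A1)$--$(A4)$ directly from $F$: since the added term vanishes on the diagonal, $G(u,u)=0$; the map $y\mapsto\langle y-u,Ju-Jx\rangle$ is affine and continuous, so $G(u,\cdot)$ stays convex and lower semicontinuous and $G(\cdot,y)$ stays upper hemicontinuous. The one genuine computation is monotonicity: in $G(u,y)+G(y,u)$ the bifunction part is $\le 0$ by $(A2)$, while the regularization contributes $\frac{1}{r}\langle y-u, Ju-Jy\rangle$, which by the pairing estimates $\langle y,Ju\rangle\le\|y\|\|u\|$ and $\langle u,Jy\rangle\le\|u\|\|y\|$ is bounded above by $-\frac1r(\|u\|-\|y\|)^2\le0$. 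Thus $G$ is monotone.

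Next I would run a Ky Fan (KKM) argument to produce a solution on a bounded piece. Working inside a weakly compact convex set $K\subset C$, set $F_2(y)=\{u\in K: G(y,u)\le0\}$. By the convexity and lower semicontinuity of $G(y,\cdot)$ these are weakly closed, hence weakly compact, subsets of $K$. They form a KKM family: the sets $F_1(y)=\{u\in K:G(u,y)\ge0\}$ satisfy $F_1(y)\subset F_2(y)$ by monotonicity, and $F_1$ is a KKM map because $0=G(y_0,y_0)\le\sum\lambda_i G(y_0,y_i)$ forbids $y_0=\sum\lambda_i y_i$ from avoiding every $F_1(y_i)$. Fan's lemma then gives $z\in\bigcap_{y\in K}F_2(y)$, that is, $G(y,z)\le0$ for all $y\in K$. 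To upgrade this to $G(z,\cdot)\ge0$ I would linearise: for $w\in K$ and $t\in(0,1)$ put $y_t=tw+(1-t)z$, so that $0=G(y_t,y_t)\le tG(y_t,w)+(1-t)G(y_t,z)\le tG(y_t,w)$, whence $G(y_t,w)\ge0$; letting $t\to0^+$ and invoking $(A3)$ together with the continuity of the pairing in $Jy_t$ yields $G(z,w)\ge0$.

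To remove the boundedness restriction I would apply the previous step to $D_n=C\cap\{u:\|u\|\le n\}$, obtaining $z_n$ with $G(z_n,y)\ge0$ for all $y\in D_n$, and then show $\{z_n\}$ is bounded by a coercivity estimate. Fixing any $y_0\in C$, monotonicity gives $F(u,y_0)\le -F(y_0,u)$, and since the convex lower semicontinuous function $F(y_0,\cdot)$ admits an affine minorant, $F(u,y_0)$ grows at most linearly in $\|u\|$; meanwhile $\frac1r\langle y_0-u,Ju-Jx\rangle\le\frac1r(-\|u\|^2+c\|u\|+c')$, so $G(u,y_0)\to-\infty$ as $\|u\|\to\infty$. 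As $G(z_n,y_0)\ge0$ once $n>\|y_0\|$, this forces $\|z_n\|\le M$ for some fixed $M$. Choosing $n>M$ so that $\|z_n\|<n$, every $w\in C$ satisfies $y_t=tw+(1-t)z_n\in D_n$ for small $t$, and the convexity trick $0\le G(z_n,y_t)\le tG(z_n,w)$ gives $G(z_n,w)\ge0$ for all $w\in C$. Hence $z:=z_n$ is the required point.

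I expect the coercivity estimate to be the main obstacle: one must combine monotonicity with the affine minorant of $F(y_0,\cdot)$ carefully enough to see that the quadratic decay $-\frac1r\|u\|^2$ coming from the duality term genuinely dominates the at-most-linear growth of $F(u,y_0)$, thereby pinning $\{z_n\}$ in a fixed ball. The remaining ingredients---verifying that the hypotheses transfer to $G$, the Fan/KKM step on weakly compact sets, and the two linearisation arguments---are routine once monotonicity and coercivity are in hand.
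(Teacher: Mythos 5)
Your proposal is correct, but it cannot be compared line-by-line with the paper, because the paper offers no proof of this lemma at all: it is quoted from Blum and Oettli \cite{r1}, and in this literature (e.g.\ Takahashi and Zembayashi \cite{mt2}) the standard route is to apply Blum--Oettli's existence theorem to the splitting $\Phi_1=F$, $\Phi_2(u,y)=\frac{1}{r}\langle y-u,Ju-Jx\rangle$, where $\Phi_1$ is required to be monotone and $\Phi_2$ is not. What you do instead is fold the regularizer into a single bifunction $G$, check that $G$ is itself monotone --- your estimate $\langle y-u,Ju-Jy\rangle\le-\left(\|u\|-\|y\|\right)^{2}\le 0$ is exactly right --- and satisfies (A1)--(A4), and then reprove existence for monotone bifunctions from scratch: the dual-set KKM argument ($F_1(y)\subset F_2(y)$ by monotonicity, $F_2(y)$ convex and weakly compact by (A4)), Fan's lemma, the Minty linearisation passing from $G(y,z)\le 0$ for all $y$ to $G(z,\cdot)\ge 0$, and a coercivity estimate (the affine minorant of the proper convex lower semicontinuous function $F(y_0,\cdot)$, dominated by the quadratic decay of the duality term) to remove the boundedness restriction. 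Every one of these steps is sound, so your sketch amounts to a genuine self-contained proof; what the citation buys the paper is brevity, and what your argument buys is transparency about where each hypothesis enters. On that score two points should be made explicit rather than left implicit: the ``continuity of the pairing in $Jy_t$'' invoked in your Minty step is the norm-to-weak$^{*}$ continuity of $J$, which is precisely what smoothness of $E$ provides ($J$ is the G\^{a}teaux gradient of $\tfrac{1}{2}\|\cdot\|^{2}$), and it is this fact --- not the affinity and continuity in $y$ that you cite for it --- that makes $G(\cdot,y)$ upper hemicontinuous; and reflexivity is what makes your truncations $D_n$ weakly compact. Strict convexity of $E$ is never used in your argument, which is consistent with the literature: it is needed only for the uniqueness assertion (i) of Lemma \ref{6}, not for existence.
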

The following lemma was given in \cite{mt2}.
\begin{lem}\label{6}
Let $C$ be a nonempty closed convex subset of  a smooth, strictly
convex, and reflexive Banach space $E$ ,  let $F$ be a bifunction
of $C\times C$ into $\mathbb{R}$ satisfying $(A1)-(A4).$ Let
$r>0$ and $x\in E$.
  define a mapping  $T_{r}:E\to C$ as follows: $$S_{r}x=\{z\in C :  F(z,y)+\frac{1}{r}\langle
y-z,Jz-Jx\rangle\geq0 ,\forall y\in C\}.$$ Then, the following
hold:
\begin{enumerate}\item[ (i)] $S_{r}$ is single valued;
\item[ (ii)]$S_{r}$ is  firmly nonexpansive-type mapping, i.e., for any $x,y\in E$,
$$\langle S_{r}x-S_{r}y,JS_{r}x-JS_{r}y\rangle\leq\langle S_{r}x-S_{r}y,Jx-Jy\rangle;$$
\item[ (iii)]$F(S_{r})=EP(F);$
\item[ (iv)] $EP(F)$ is closed and convex.
\end{enumerate}
\end{lem}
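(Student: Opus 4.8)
The plan is to establish the four assertions in turn, using throughout that Lemma \ref{5} guarantees $S_{r}x\neq\emptyset$ for every $x\in E$, so that $S_{r}$ is a well-defined map of $E$ into $C$ once single-valuedness is known. For (i) and (ii) I would run a single algebraic template. Taking $u\in S_{r}x$ and $v\in S_{r}y$, I substitute $y:=v$ into the inequality defining $u$ and $y:=u$ into the inequality defining $v$, add the two, and use monotonicity (A2) to discard the bifunction terms via $F(u,v)+F(v,u)\le 0$. After multiplying through by $r$ and regrouping the duality pairings this leaves
$$\langle u-v,\,Ju-Jv\rangle\le\langle u-v,\,Jx-Jy\rangle,$$
which is exactly the firmly nonexpansive-type inequality (ii). Specializing to $x=y$ makes the right-hand side vanish, so any two elements $z_{1},z_{2}\in S_{r}x$ satisfy $\langle z_{1}-z_{2},Jz_{1}-Jz_{2}\rangle\le 0$; since $E$ is strictly convex, $J$ is strictly monotone and this forces $z_{1}=z_{2}$, giving (i).

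Assertion (iii) is immediate from the definitions: $p=S_{r}p$ holds iff $F(p,y)+\frac{1}{r}\langle y-p,Jp-Jp\rangle\ge 0$ for all $y\in C$, and here the pairing term is zero, so the condition reads $F(p,y)\ge 0$ for all $y\in C$, that is $p\in EP(F)$. Hence $F(S_{r})=EP(F)$.

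The heart of (iv) is an auxiliary $\phi$-estimate. Fix $q\in EP(F)=F(S_{r})$ and write $u=S_{r}x$. Putting $y:=q$ in the inequality defining $u$ and using $F(u,q)\le -F(q,u)\le 0$, which follows from (A2) and $q\in EP(F)$, gives $\langle q-u,Ju-Jx\rangle\ge 0$. Inserting this into the three-point identity $\phi(q,x)-\phi(q,u)-\phi(u,x)=2\langle q-u,Ju-Jx\rangle$ produces
$$\phi(q,S_{r}x)+\phi(S_{r}x,x)\le\phi(q,x),\qquad \forall\, q\in F(S_{r}).$$
Closedness and convexity of $EP(F)=F(S_{r})$ now follow from this single estimate. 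For closedness, take $q_{n}\in F(S_{r})$ with $q_{n}\to q\in C$, apply the estimate with $x:=q$, and pass to the limit using continuity of $\phi$ in its first argument together with $\phi(q,q)=0$; the limit forces $\phi(S_{r}q,q)=0$, hence $S_{r}q=q$ by Remark 1. For convexity, take $q_{1},q_{2}\in F(S_{r})$, set $q=tq_{1}+(1-t)q_{2}$ with $t\in[0,1]$ and $u=S_{r}q$, apply the estimate to $q_{1}$ and to $q_{2}$ with $x:=q$, and form the convex combination of the two inequalities; a brief computation using linearity of the duality pairing collapses the right-hand side and leaves $\phi(q,u)+\phi(u,q)\le 0$, again giving $S_{r}q=q$.

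The steps in (i)--(iii) are routine once the add-and-cancel substitution is in place, and the real work is in (iv). I expect the main obstacle to be the convexity of $EP(F)$: because $F$ is not assumed convex (nor even continuous) in its first variable, convexity cannot be read off directly from the definition and must instead be extracted from the firmly nonexpansive structure through the $\phi$-identity above. Verifying that three-point identity and tracking signs carefully in the cancellation is where the argument has to be handled with care.
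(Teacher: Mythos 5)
Your proposal is correct, but note that the paper itself never proves this lemma: it is quoted verbatim from Takahashi--Zembayashi \cite{mt2} (who in turn build on Blum--Oettli \cite{r1}), so the comparison here is with that literature rather than with an argument in the text. Your route is essentially the standard one and all the key steps check out: the add-and-cancel substitution plus (A2) yields $\langle u-v,Ju-Jv\rangle\le\langle u-v,Jx-Jy\rangle$ for any $u\in S_rx$, $v\in S_ry$, which gives (ii), and taking $x=y$ gives (i) via strict monotonicity of $J$ (valid precisely because $E$ is smooth and strictly convex); (iii) is indeed immediate since the pairing term vanishes at a fixed point; and your three-point identity $\phi(q,x)-\phi(q,u)-\phi(u,x)=2\langle q-u,Ju-Jx\rangle$ is verified by direct expansion, so the sign argument from $F(u,q)\le-F(q,u)\le 0$ is sound. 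A pleasant feature of your organization is that the estimate you derive in step (iv), $\phi(q,S_rx)+\phi(S_rx,x)\le\phi(q,x)$, is exactly the paper's Lemma \ref{7} (also quoted without proof from \cite{mt2}), so your single pairing inequality proves both quoted lemmas at once; this is a genuine economy over treating them as separate black boxes. One small simplification you could make: for convexity the full estimate is not needed --- the weaker quasi-$\phi$-nonexpansiveness $\phi(q_i,S_rq)\le\phi(q_i,q)$ alone, combined convexly, already collapses to $\phi(q,S_rq)\le 0$, and Remark 1 finishes; the extra $\phi(u,q)$ term you carry along is harmless but does no work there.
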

\begin{lem}(\cite{mt2})\label{7}
Let $C$ be a nonempty closed convex subset of  a smooth, strictly
convex, and reflexive Banach space $E$,  let $F$ be a bifunction
of $C\times C$ into $\mathbb{R}$ satisfying $(A1)-(A4),$  and let
 $r>0.$ Then for all $x\in E$ and $q\in
 F(S_{r})$,$$\phi(q,S_{r}x)+\phi(S_{r}x,x)\leq \phi(q,x).$$
\end{lem}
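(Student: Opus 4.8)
The plan is to reduce the whole inequality to the standard three-point identity for $\phi$ and then to control the sign of a single duality pairing using the defining variational inequality for $S_{r}$ together with the monotonicity assumption (A2). Write $z = S_{r}x$. First I would expand $\phi$ via its definition $\phi(a,b)=\|a\|^{2}-2\langle a,Jb\rangle+\|b\|^{2}$ and verify the identity
$$\phi(q,z)+\phi(z,x)-\phi(q,x)=2\langle z-q,Jz-Jx\rangle,$$
which is a routine computation using $\|z\|^{2}=\langle z,Jz\rangle$. Consequently it suffices to show that $\langle z-q,Jz-Jx\rangle\leq 0$.

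To bound this pairing I would invoke the definition of $z=S_{r}x$ from Lemma \ref{6}: for every $y\in C$,
$$F(z,y)+\frac{1}{r}\langle y-z,Jz-Jx\rangle\geq 0.$$
Choosing the test point $y=q$ and rearranging gives
$$\langle z-q,Jz-Jx\rangle\leq r\,F(z,q).$$
Since $q\in F(S_{r})=EP(F)$ by Lemma \ref{6}(iii), the point $q$ satisfies $F(q,y)\geq 0$ for all $y\in C$; taking $y=z$ yields $F(q,z)\geq 0$. Applying the monotonicity condition (A2), namely $F(z,q)+F(q,z)\leq 0$, we obtain $F(z,q)\leq -F(q,z)\leq 0$.

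Combining these estimates, $\langle z-q,Jz-Jx\rangle\leq r\,F(z,q)\leq 0$, and therefore by the identity above $\phi(q,z)+\phi(z,x)-\phi(q,x)\leq 0$, which is exactly the claimed inequality $\phi(q,S_{r}x)+\phi(S_{r}x,x)\leq\phi(q,x)$.

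The computation is short, and the only genuinely substantive step is the decision to test the variational inequality at the fixed point $y=q$ and then to convert $F(z,q)$ into a nonpositive quantity via monotonicity; without (A2) the sign of $F(z,q)$ could not be controlled and the argument would break down. Everything else is the algebraic identity for $\phi$, which presents no real obstacle.
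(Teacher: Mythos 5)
Your proof is correct. The three-point identity $\phi(q,z)+\phi(z,x)-\phi(q,x)=2\langle z-q,Jz-Jx\rangle$ checks out by direct expansion using $\|z\|^{2}=\langle z,Jz\rangle$; the test point $y=q$ is legitimate since $q\in F(S_{r})\subset C$; and the chain $\langle z-q,Jz-Jx\rangle\leq rF(z,q)\leq -rF(q,z)\leq 0$ is sound because $q\in F(S_{r})=EP(F)$ gives $F(q,z)\geq 0$ and $r>0$. Note, however, that the paper itself gives no proof of this lemma: it is quoted verbatim from \cite{mt2}, and the proof there takes a slightly different route. It invokes the firmly nonexpansive-type property of $S_{r}$ (part (ii) of Lemma \ref{6}) with $y=q$; since $S_{r}q=q$, that property collapses to $\langle S_{r}x-q,JS_{r}x-Jq\rangle\leq\langle S_{r}x-q,Jx-Jq\rangle$, hence $\langle S_{r}x-q,JS_{r}x-Jx\rangle\leq 0$, and one concludes with the same three-point identity you use. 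Your argument effectively inlines the proof of that property in the special case where one argument is a fixed point: testing the variational inequality at $q$ and appealing to (A2) is precisely how firm nonexpansiveness is established in the first place. The citation route is shorter given the stated preliminaries, since Lemma \ref{6}(ii) is already on record and no direct appeal to the axioms is needed; your route is more self-contained and makes explicit exactly where monotonicity (A2) enters, needing from Lemma \ref{6} only parts (i) and (iii) (well-definedness of $S_{r}x$ and $F(S_{r})=EP(F)$) rather than part (ii) as a black box.
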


\begin{lem} \cite{ea} \label{8} Let $C$ be a nonempty closed convex subset of a uniformly convex and smooth Banach
space $E$. Suppose $T:C\to P(C)$ is a multivalued mapping such
that $P_{T}$ is a relatively quasi-nonexpansive multivalued
mapping where $$P_{T}(x)=\{y\in Tx:\|x-y\|=dist(x,Tx)\}.$$ If
$F(T)\neq\emptyset$, then $F(T)$ is closed and convex.
\end{lem}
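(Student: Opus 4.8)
The plan is to reduce everything to the nearest-point operator $P_T$ and to exploit one structural fact. First note that $F(T)=F(P_T)$: if $p\in Tp$ then $\mathrm{dist}(p,Tp)=0$, so $\|p-p\|=0=\mathrm{dist}(p,Tp)$ gives $p\in P_Tp$, while conversely $p\in P_Tp\subseteq Tp$. Moreover, for such a $p$ one has $P_Tp=\{p\}$, since any $y\in P_Tp$ satisfies $\|p-y\|=\mathrm{dist}(p,Tp)=0$. This collapse is what tames the multivalued quantity $\Phi$: whenever the second argument is a fixed point $p$, the inner supremum/infimum degenerate and $\Phi(P_Tx,P_Tp)=\max\{\inf_{y\in P_Tx}\phi(y,p),\ \sup_{y\in P_Tx}\phi(y,p)\}=\sup_{y\in P_Tx}\phi(y,p)$. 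Hence the relatively quasi-nonexpansive hypothesis reduces, for every $p\in F(T)$, to the pointwise estimate $\phi(y,p)\le\phi(x,p)$ for all $y\in P_Tx$ and all $x\in C$.

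For closedness I would take $\{p_n\}\subseteq F(T)$ with $p_n\to p$ and show $p\in Tp$. Fix any $y\in P_Tp$ (nonempty since $Tp$ is proximinal). Applying the reduced estimate with $x=p$ and fixed point $p_n$ gives $\phi(y,p_n)\le\sup_{y'\in P_Tp}\phi(y',p_n)\le\phi(p,p_n)$. As $p_n\to p$ the right-hand side tends to $\phi(p,p)=0$, so $\phi(y,p_n)\to 0$. Since $\{p_n\}$ is bounded, Lemma~\ref{1} yields $y-p_n\to 0$, i.e. $p_n\to y$; together with $p_n\to p$ this forces $y=p$. Because $y\in P_Tp\subseteq Tp$, we conclude $p\in Tp$, that is $p\in F(T)$.

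For convexity, take $p_1,p_2\in F(T)$, let $t\in(0,1)$, put $w=tp_1+(1-t)p_2$, and choose $z\in P_Tw$; the goal is $z=w$, for then $w=z\in Tw$. The reduced hypothesis gives $\phi(z,p_1)\le\phi(w,p_1)$ and $\phi(z,p_2)\le\phi(w,p_2)$. Expanding $\phi$ and cancelling the common $\|p_i\|^2$ converts each into $\|z\|^2-\|w\|^2\le 2\langle z-w,\,Jp_i\rangle$. Forming the convex combination $t(\cdot)+(1-t)(\cdot)$ is meant to collapse the right-hand side against $w=tp_1+(1-t)p_2$ and deliver $\phi(w,z)\le 0$; since $\phi(w,z)\ge 0$ always, Remark~1 then gives $z=w$, and $z=w\in Tw$ finishes the proof.

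The main obstacle is exactly this telescoping. In the single-valued (or Hilbert) template the fixed points sit in the slot of $\phi$ that becomes the \emph{subject} of the pairing, so the convex combination passes through $\langle\,\cdot\,,v\rangle$ by linearity and $w$ reassembles at once. Here, however, the reduction leaves the fixed points inside the \emph{nonlinear} duality map, as $Jp_1,Jp_2$, and in a general smooth Banach space $tJp_1+(1-t)Jp_2\neq Jw$, so the combination does not close up to $\phi(w,z)$. Making the convexity step rigorous therefore hinges on securing a comparison with the fixed points in the \emph{first} argument of $\phi$ (an estimate of the type $\phi(p_i,z)\le\phi(p_i,w)$ rather than $\phi(z,p_i)\le\phi(w,p_i)$): once the $p_i$ appear as the subject of the pairing, linearity of $\langle\,\cdot\,,\cdot\rangle$ in its first variable reproduces the computation above and yields $\phi(w,z)\le 0$. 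I would expect the bulk of the work to go into extracting such a fixed-point-first inequality from the definition of $\Phi$ (only the norm inequality in $(1.1)$, Lemma~\ref{1}, and strict convexity via Remark~1 are needed for the rest).
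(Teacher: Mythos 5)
Your reduction ($F(T)=F(P_T)$, $P_Tp=\{p\}$, hence $\Phi(P_Tx,P_Tp)=\sup_{y\in P_Tx}\phi(y,p)$) and your closedness argument are correct and complete: the estimate $\phi(y,p_n)\le\phi(p,p_n)$, the convergence $\phi(p,p_n)\to 0$ (which follows from $\phi(p,p_n)=\|p\|^{2}-\|p_n\|^{2}-2\langle p-p_n,Jp_n\rangle$), and Lemma~\ref{1} do the job. The convexity half, however, is a genuine gap, and you have diagnosed its source without closing it. From the definition as literally printed in this paper, $\Phi(Tx,Tp)\le\phi(x,p)$ for $p\in F(T)$, one only obtains the fixed-point-second estimate $\phi(z,p)\le\phi(w,p)$ for $z\in P_Tw$, and, as you observe, the telescoping then collides with the nonlinearity of $J$. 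The fixed-point-first inequality $\phi(p,z)\le\phi(p,w)$ that you correctly identify as the missing ingredient cannot be extracted from the fixed-point-second one: since $\phi$ is not symmetric and $J$ is not linear, the two conditions are genuinely different constraints in a non-Hilbert Banach space, so the extraction you defer to ("the bulk of the work") is not a derivation that can be carried out, and convexity remains unproven in your write-up.

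The resolution is that the printed definition has its arguments in the opposite order from how the paper (and the source \cite{ea}, where this lemma is proved) actually uses the notion. In the proof of Theorem 3.1 the author invokes relative quasi-nonexpansiveness through the chain $\phi(u,z_{n,i})\le\Phi(P_{T_i}u,P_{T_i}x_n)\le\phi(u,x_n)$, i.e.\ with the fixed point in the \emph{first} slot of both $\Phi$ and $\phi$; this also matches the single-valued notion $\phi(p,Tx)\le\phi(p,x)$ being generalized. Reading the hypothesis this way, $\Phi(P_Tp,P_Tx)\le\phi(p,x)$ with $P_Tp=\{p\}$ gives exactly $\phi(p,z)\le\phi(p,x)$ for all $z\in P_Tx$, and then your own algebra finishes convexity at once: for $w=tp_1+(1-t)p_2$ and $z\in P_Tw$,
\begin{align*}
\phi(w,z)&=\|w\|^{2}+t\bigl(\phi(p_{1},z)-\|p_{1}\|^{2}\bigr)+(1-t)\bigl(\phi(p_{2},z)-\|p_{2}\|^{2}\bigr)\\
&\le\|w\|^{2}+t\bigl(\phi(p_{1},w)-\|p_{1}\|^{2}\bigr)+(1-t)\bigl(\phi(p_{2},w)-\|p_{2}\|^{2}\bigr)=\phi(w,w)=0,
\end{align*}
so $z=w$ by Remark 1 (uniform convexity gives reflexivity and strict convexity), hence $w\in Tw$. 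Closedness also survives this reading, with the two arguments of $\phi$ swapped in your estimate. In short: right reduction, right closedness proof, right identification of the needed inequality, but the convexity step is left open, and under the paper's literal definition it cannot be repaired along the route you indicate; it requires re-reading (correcting) the definition, not deriving the fixed-point-first inequality from it.
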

\section{Main Result}
In this section, we prove strong convergence theorems for finding
a common element of the set of solutions for an equilibrium
problem and the set of fixed points of an infinite family of
relatively quasi- nonexpansive multivalued mappings in a Banach
space.
\begin{thm}
Let $E$ be a uniformly smooth and uniformly convex Banach space,
and let $C$ be a nonempty closed convex subset of $E$. Let $F$ be
a bifunction from $C\times C$ into $\mathbb{R}$ satisfying
$(A1)-(A4).$ Let $T_{i}:C\to N(C)$,  be a sequence of multivalued mappings such that for each $i\in\mathbb{N}$, $P_{T_{i}}$ is
 closed relatively quasi- nonexpansive multivalued
mappings and $\mathcal {F}=\bigcap_{i=1}^{\infty}F(T_{i})\bigcap
EP(F)\neq\emptyset$.  For $x_{0}\in C$ and $C_{0}=C$, let
$\{x_{n}\}$ be a sequence generated by  the following algorithm:
$$\begin {cases}
y_{n}=J^{-1}(a_{n,0}Jx_{n}+\sum_{i=1}^{\infty}a_{n,i}Jz_{n,i}),
\\u_{n}\in C :
F(u_{n},y)+\frac{1}{r_{n}}\langle
y-u_{n},Ju_{n}-Jy_{n}\rangle\geq0 ,\qquad \forall y\in C,\\
C_{n+1}=\{z\in C_{n}: \phi(z,u_{n})\leq \phi(z,x_{n})\},\\
x_{n+1}=\prod_{C_{n+1}}x_{0}, \qquad \forall n\geq 0,\end{cases}$$
 where  $\sum_{i=0}^{\infty} a_{n,i}=1$ and   $z_{n,i}\in P_{T_{i}}x_{n}$. Assume further that  $\liminf_{n}a_{n,0}\,a_{n,i}>0$, $\{r_{n}\}\subset
 (0,\infty)$ and $\liminf_{n}r_{n}>0$. Then $\{x_{n}\}$ converges strongly
 to $\Pi _{\mathcal{F}}x_{0}$, where $\Pi_{\mathcal{F}}$ is the
 projection of $E$ onto $\mathcal{F}$.
\end{thm}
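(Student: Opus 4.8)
The plan is to follow the standard shrinking-projection architecture. The proof naturally splits into five stages: (1) show each $C_n$ is closed and convex so that the generalized projection $\Pi_{C_{n+1}}$ is well-defined; (2) show $\mathcal{F}\subset C_n$ for all $n$, so the iteration never stalls and $\Pi_{\mathcal F}x_0$ makes sense; (3) prove $\{x_n\}$ is bounded and that $\phi(x_n,x_0)$ is monotone, forcing $\{x_n\}$ to be Cauchy; (4) identify the strong limit $p$ as a member of $\mathcal F$; and (5) conclude $p=\Pi_{\mathcal F}x_0$.

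Let me think about each stage carefully.

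**Stage 1 (closedness/convexity of $C_n$):** The constraint set is $C_{n+1}=\{z\in C_n:\phi(z,u_n)\le\phi(z,x_n)\}$. Expanding $\phi(z,u_n)\le\phi(z,x_n)$ using the definition $\phi(x,y)=\|x\|^2-2\langle x,Jy\rangle+\|y\|^2$ gives $\|u_n\|^2-2\langle z,Ju_n\rangle\le\|x_n\|^2-2\langle z,Jx_n\rangle$, which is equivalent to $2\langle z,Jx_n-Ju_n\rangle\le\|x_n\|^2-\|u_n\|^2$. That's a linear inequality in $z$, hence defines a closed half-space, so $C_{n+1}$ is the intersection of the closed convex $C_n$ with a half-space — closed and convex by induction starting from $C_0=C$. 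Good, this is routine.

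**Stage 2 (the key monotonicity — $\mathcal F\subset C_n$):** This is where the real content lives. Take $p\in\mathcal F$. I need $\phi(p,u_n)\le\phi(p,x_n)$. Write $u_n=S_{r_n}y_n$. By Lemma 7 (the generalized-firm-nonexpansive estimate), $\phi(p,u_n)=\phi(p,S_{r_n}y_n)\le\phi(p,y_n)$. So it suffices to show $\phi(p,y_n)\le\phi(p,x_n)$. Now $y_n=J^{-1}(a_{n,0}Jx_n+\sum_{i\ge1}a_{n,i}Jz_{n,i})$, so by the property $\phi(p,J^{-1}(\sum\lambda_i w_i))\le\sum\lambda_i\phi(p,J^{-1}w_i)$ — convexity of $\phi(p,\cdot)$ in the duality-image variable, which follows from $\|\cdot\|^2$ convexity — I get $\phi(p,y_n)\le a_{n,0}\phi(p,x_n)+\sum_{i\ge1}a_{n,i}\phi(p,z_{n,i})$. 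Since $z_{n,i}\in P_{T_i}x_n$ and $P_{T_i}$ is relatively quasi-nonexpansive with $p\in F(T_i)$, the definition gives $\Phi(P_{T_i}x_n,P_{T_i}p)\le\phi(x_n,p)$; extracting $\phi(z_{n,i},p)$... here I must be careful, since the definition bounds a sup-inf, not $\phi(z_{n,i},p)$ for the \emph{particular} selected $z_{n,i}$. **The hard part will be** verifying that the relatively-quasi-nonexpansive inequality $\Phi(P_{T_i}x_n,P_{T_i}p)\le\phi(x_n,p)$ really delivers $\phi(p,z_{n,i})\le\phi(p,x_n)$ for the chosen selection $z_{n,i}\in P_{T_i}x_n$; since $p\in F(T_i)$ one expects $P_{T_i}p=\{p\}$, collapsing the outer max and sup so that $\sup_{y\in P_{T_i}x_n}\inf_{q\in\{p\}}\phi(y,q)=\sup_{y\in P_{T_i}x_n}\phi(y,p)\ge\phi(z_{n,i},p)$, giving the bound — but the $\phi$-arguments must be checked in the correct order since $\phi$ is not symmetric. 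Granting this, all terms are $\le\phi(p,x_n)$ and the convex combination collapses to $\phi(p,y_n)\le\phi(p,x_n)$, hence $p\in C_{n+1}$.

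**Stages 3--5 (boundedness, Cauchy, limit identification):** Since $x_{n+1}=\Pi_{C_{n+1}}x_0$ and $\mathcal F\subset C_{n+1}\subset C_n$, Lemma 3 gives $\phi(x_n,x_0)\le\phi(p,x_0)$ uniformly, so $\{x_n\}$ is bounded; and $x_{n+1}\in C_{n+1}\subset C_n$ together with $x_n=\Pi_{C_n}x_0$ forces $\phi(x_n,x_0)\le\phi(x_{n+1},x_0)$, so $\{\phi(x_n,x_0)\}$ is nondecreasing and bounded, hence convergent. Then $\phi(x_m,x_n)\le\phi(x_m,x_0)-\phi(x_n,x_0)\to0$ for $m>n$ (via Lemma 3 with $x_n=\Pi_{C_n}x_0$ and $x_m\in C_n$), so by Lemma 1 $\{x_n\}$ is Cauchy; let $x_n\to p$. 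From $x_{n+1}\in C_{n+1}$ I have $\phi(x_{n+1},u_n)\le\phi(x_{n+1},x_n)\to0$, so Lemma 1 gives $u_n-x_{n+1}\to0$, whence $u_n\to p$ and also $x_n-u_n\to0$. To recover the mappings: from $\phi(x_{n+1},u_n)\le\phi(x_{n+1},x_n)$ and Lemma 7 applied to $\phi(p,u_n)$, combined with the Lemma 4 convexity estimate (the $-a_{n,i}a_{n,0}\,g(\|Jx_n-Jz_{n,i}\|)$ term), the condition $\liminf_n a_{n,0}a_{n,i}>0$ forces $g(\|Jx_n-Jz_{n,i}\|)\to0$, hence $\|Jx_n-Jz_{n,i}\|\to0$, and since $J^{-1}$ is uniformly continuous on bounded sets, $z_{n,i}\to p$ with $\mathrm{dist}(x_n,T_ix_n)\le\|x_n-z_{n,i}\|\to0$; closedness of $P_{T_i}$ then yields $p\in F(T_i)$ for every $i$. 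Similarly $y_n\to p$ and the equilibrium inequality passes to the limit (using (A2), (A4) and $\liminf r_n>0$ in the standard way) to give $p\in EP(F)$, so $p\in\mathcal F$. Finally, taking $y=p$ in Lemma 2's characterization of $x_{n+1}=\Pi_{C_{n+1}}x_0$ and letting $n\to\infty$ shows $\langle p-q,Jx_0-Jp\rangle\ge0$ for all $q\in\mathcal F$, which by Lemma 2 means exactly $p=\Pi_{\mathcal F}x_0$, completing the proof.
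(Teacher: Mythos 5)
Your proposal is correct and follows the paper's own proof essentially step for step: the same induction showing $\mathcal{F}\subset C_{n}$, the same use of Lemma 3 to get boundedness and monotonicity of $\phi(x_{n},x_{0})$, Lemma 1 for the Cauchy property and for $\|x_{n+1}-u_{n}\|\to 0$, Lemma 4 applied in $E^{*}$ together with $\liminf_{n}a_{n,0}a_{n,i}>0$ to force $\|Jx_{n}-Jz_{n,i}\|\to 0$, closedness of $P_{T_{i}}$ for the fixed-point part, the standard Takahashi--Zembayashi limiting argument for $EP(F)$ (which the paper likewise only cites), and Lemma 2 for the identification $p=\Pi_{\mathcal{F}}x_{0}$. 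One cosmetic slip: you should write $\mathrm{dist}(x_{n},P_{T_{i}}x_{n})\leq\|x_{n}-z_{n,i}\|$ rather than $\mathrm{dist}(x_{n},T_{i}x_{n})$, since closedness is assumed for $P_{T_{i}}$, not for $T_{i}$.

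Concerning the one step you flagged and then granted: the paper resolves it by exactly the singleton collapse you anticipated. For $u\in\mathcal{F}$ one has $P_{T_{i}}u=\{u\}$, hence $\Phi(P_{T_{i}}u,P_{T_{i}}x_{n})=\sup_{q\in P_{T_{i}}x_{n}}\phi(u,q)\geq\phi(u,z_{n,i})$, and the paper then invokes relative quasi-nonexpansiveness in the form $\Phi(P_{T_{i}}u,P_{T_{i}}x_{n})\leq\phi(u,x_{n})$. Your suspicion about the order of the arguments is, however, well founded: the definition as literally stated yields $\Phi(P_{T_{i}}x_{n},P_{T_{i}}u)=\sup_{y\in P_{T_{i}}x_{n}}\phi(y,u)\leq\phi(x_{n},u)$, i.e.\ the fixed point in the \emph{second} slot of $\phi$, whereas the shrinking-projection argument needs it in the \emph{first} slot, and since $\phi$ is not symmetric these are genuinely different statements. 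The paper passes from one to the other by silently writing $\Phi$ with its arguments swapped, i.e.\ by tacitly reading the definition with the roles of $x$ and $p$ interchanged. So the place where you wrote ``granting this'' is precisely the place where the paper is equally loose; your argument diverges from the paper's nowhere, and it is rigorous under exactly the same fixed-point-first reading of the definition that the paper itself uses.
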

\begin{proof}
First, we show by induction that $\mathcal
{F}=(\bigcap_{i=1}^{\infty}F(T_{i}))\bigcap EP(F)\subset C_{n}$ for all
$n\geq 0$. From $C_{0}=C$, we have $\mathcal{F}\subset C_{0}.$ We
suppose that $\mathcal{F}\subset C_{n}$ for some $n\geq 0$. Let
$u\in \mathcal{F}$, then we have $P_{T_{i}}u=\{u\}, (i\in\mathbb{N})$.
 Since $S_{r_{n}}$ and $T_{i}$ are
relatively quasi- nonexpansive, we have
\begin{multline*}
\phi(u,u_{n})=\phi(u,S_{r_{n}}y_{n})\leq\phi(u,y_{n})=\phi(u,J^{-1}(a_{n,0}Jx_{n}+\sum_{i=1}^{\infty}a_{n,i}Jz_{n,i}))\\
=\|u\|^{2}-2\langle
u,a_{n,0}Jx_{n}+\sum_{i=1}^{\infty}a_{n,i}Jz_{n,i}\rangle+
\|a_{n,0}Jx_{n}+\sum_{i=1}^{\infty}a_{n,i}Jz_{n,i}\|^{2}\\
\leq \|u\|^{2}-2a_{n,0}\langle u,Jx_{n}\rangle -2\sum_{i=1}^{\infty}a_{n,i}\langle
u,Jz_{n,i}\rangle +a_{n,0}\|x_{n}\|^{2}+ \sum_{i=1}^{\infty}a_{n,i} \|z_{n,i}\|^{2}\\
=a_{n,0}\phi(u,x_{n})+\sum_{i=1}^{\infty}a_{n,i}\phi(u,z_{n,i})\\=
a_{n,0}\phi(u,x_{n})+\sum_{i=1}^{\infty}a_{n,i}\inf_{u\in P_{T_{i}}u}\phi(u,z_{n,i})\\\leq
a_{n,0}\phi(u,x_{n})+\sum_{i=1}^{\infty}a_{n,i}\Phi(P_{T_{i}}u,P_{T_{i}}x_{n})\\\leq
a_{n,0}\phi(u,x_{n})+\sum_{i=1}^{\infty}a_{n,i}\phi(u,x_{n})=
\phi(u,x_{n}),
\end{multline*}
which implies that
 $u\in C_{n+1}.$ Hence
$$ \mathcal {F}=\bigcap_{i=1}^{\infty}F(T_{i})\bigcap EP(F)\subset C_{n},\qquad  \forall n\geq 0.$$
We observe that $C_{n}$ is closed and convex (see \cite{mt1,mt2}).
From $x_{n}=\Pi_{C_{n}}x_{0}$, we have \begin{equation}\label{18}\langle
x_{n}-z,Jx_{0}-Jx_{n}\rangle\geq 0, \qquad \forall z\in
C_{n}.\end{equation}
Since $\mathcal{F}\subset C_{n}$ for all $n\geq 0$, we obtain that
$$\langle
x_{n}-u,Jx_{0}-Jx_{n}\rangle\geq 0 \qquad \forall u\in\mathcal{F}.$$
 From Lemma \ref{3}  we have
$$\phi(x_{n},x_{0})=\phi(\Pi_{C_{n}}x_{0},x_{0})\leq
\phi(u,x_{0})-\phi(u,\Pi_{C_{n}}x_{0})\leq \phi(u,x_{0})$$ for all $u\in
\mathcal{F}\subset C_{n} $. Then the sequence $\phi(x_{n},x_{0})$ is
bounded. Thus $\{x_{n}\}$ is bounded. From
$x_{n}=\Pi_{C_{n}}x_{0}$ and  $x_{n+1}\in C_{n+1}\subset C_{n}$ we
have
$$\phi(x_{n},x_{0})\leq\phi(x_{n+1},x_{0}),\qquad \forall n\geq0.$$
Therefore $\{\phi(x_{n},x_{0})\}$ is nondecreasing. So the limit of
$\{\phi(x_{n},x_{0})\}$ exists. By the construction of $C_{n}$ for
any positive integer $m\geq n$ we have $$x_{m}=\Pi _{C_{m}}x_{0}\in
C_{m}\subset C_{n}.$$ It follows that
\begin{multline*}\phi(x_{m},x_{n})=\phi(x_{m},\Pi_{C_{n}}x_{0})\\\leq
\phi(x_{m},x_{0})-\phi(\Pi_{C_{n}}x_{0},x_{0})\\=\phi(x_{m},x_{0})-\phi(x_{n},x_{0}).\end{multline*}
Letting $m,n\to\infty$  we have
\begin{equation}\label{11}\lim_{n\to\infty}\phi(x_{m},x_{n})=0.\end{equation} It follows from Lemma
\ref{1} that $x_{m}-x_{n}\to 0$ as $m,n\to \infty$. Hence $\{x_{n}\}$
is a Cauchy sequence. Since $C$ is closed and convex subset of
Banach space $E$, we can assume that $x_{n}\to z$ as $n\to\infty$.
Next we show $z\in \bigcap_{i=1}^{\infty} F(T_{i})$. By taking $m=n+1$
in (\ref{11}) we get
\begin{equation}\label{20}\lim_{n\to\infty}\phi(x_{n+1},x_{n})=0.\end{equation} It follows
from Lemma \ref{1} that
\begin{equation}\label{12}\lim_{n\to\infty}\|x_{n+1}-x_{n}\|= 0.\end{equation}
From $x_{n+1}=\Pi_{C_{n+1}}x\in C_{n+1}$, we have
$$\phi(x_{n+1},u_{n})\leq \phi(x_{n+1},x_{n}),\qquad n\geq 0,$$
it follows from  (\ref{20}) that
$$\lim_{n\to\infty}\phi(x_{n+1},u_{n})=0.$$
By Lemma \ref{1} we have
\begin{equation}\label{13}\lim_{n\to\infty}\|x_{n+1}-u_{n}\|=0.\end{equation} Combining (\ref{12}) with
(\ref{13}) one see
\begin{equation}\label{14}\lim_{n\to\infty}\|x_{n}-u_{n}\|\leq \lim_{n\to\infty}(\|x_{n+1}-x_{n}\|+\|x_{n+1}-u_{n}\|)=0.\end{equation}
It follows from $x_{n}\to z$ as $n\to\infty$ that $u_{n}\to z$ as
$n\to\infty.$
 Since $J$ is uniformly
norm-to-norm continuous on bounded sets and
$\lim_{n\to\infty}\|x_{n}-u_{n}\|=0$, we have
\begin{equation}\label{15}\lim_{n\to\infty}\|Jx_{n}-Ju_{n}\|=0.\end{equation}
We show that $\{z_{n,i}\}$ is bounded for $i\in\mathbb{N}.$ Indeed,
for $u\in \mathcal{F}$ we have
$$(\|z_{n,i}\|-\|u\|)^{2}\leq \phi(z_{n,i},u)\leq
\phi(x_{n},u)\leq (\|x_{n}\|+\|u\|)^{2}.$$ Since $\{x_{n}\}$ is
bounded, we obtain $\{z_{n,i}\}$ is bounded for $i\in\mathbb{N}.$ Let
$$r=sup_{n\geq 0}\{\|x_{n}\|,\|z_{n,i}\|:i\in\mathbb{N}\}.$$
Since $E$ is a uniformly smooth Banach space, we know that
$E^{*}$ is a uniformly convex Banach space. Therefore from Lemma
\ref{4} there exists a continuous strictly increasing, and convex
function $g$ with $g(0)=0$ such that
\begin{multline}
\phi(u,u_{n})=\phi(u,T_{r_{n}}y_{n})\leq \phi(u,y_{n})\\
=\phi(u,J^{-1}(a_{n,0}Jx_{n}+\sum_{i=1}^{\infty}a_{n,i}Jz_{n,i}))\\
=\|u\|^{2}-2\langle
u,a_{n,0}Jx_{n}+\sum_{i=1}^{\infty}a_{n,i}Jz_{n,i}\rangle+
\|a_{n,0}Jx_{n}+\sum_{i=1}^{\infty}a_{n,i}Jz_{n,i}\|^{2}\\
\leq \|u\|^{2}-2a_{n,0}\langle u,Jx_{n}\rangle -2\sum_{i=1}^{\infty}a_{n,i}\langle
u,Jz_{n,i}\rangle +a_{n,0}\|x_{n}\|^{2}+ \sum_{i=1}^{\infty}a_{n,i} \|z_{n,i}\|^{2}-a_{n,0}a_{n,i} g(\|Jx_{n}-Jz_{n,i}\|)\\
=a_{n,0}\phi(u,x_{n})+\sum_{i=1}^{\infty}a_{n,i}\phi(u,z_{n,i})-a_{n,0}a_{n,i} g(\|Jx_{n}-Jz_{n,i}\|)\\\leq
a_{n,0}\phi(u,x_{n})+\sum_{i=1}^{\infty}a_{n,i}\Phi(P_{T_{i}}u,P_{T_{i}}x_{n})-a_{n,0}a_{n,i} g(\|Jx_{n}-Jz_{n,i}\|)\\\leq
a_{n,0}\phi(u,x_{n})+\sum_{i=1}^{\infty}a_{n,i}\phi(u,x_{n})-a_{n,0}a_{n,i} g(\|Jx_{n}-Jz_{n,i}\|)\\\leq
\phi(u,x_{n})-a_{n,0}a_{n,i} g(\|Jx_{n}-Jz_{n,i}\|).
\end{multline}
It follow that \begin{equation}\label{17}a_{n,0}a_{n,i}
g(\|Jx_{n}-Jz_{n,i}\|)\leq \phi(u,x_{n})-\phi(u,u_{n}) \qquad n
\geq 0.\end{equation} On the other hand
 \begin{multline*}
\phi(u,x_{n})-\phi(u,u_{n})=\|x_{n}\|^{2}-\|u_{n}\|^{2}-2\langle u,Jx_{n}-Ju_{n}\rangle\\
\leq |\,\|x_{n}\|^{2}-\|u_{n}\|^{2}\,|+2|\langle
u,Jx_{n}-Ju_{n}\rangle\,|\\\leq
|\,\|x_{n}\|-\|u_{n}\|\,|(\|x_{n}\|+\|u_{n}\|)+2\|u\|\|
 Jx_{n}-Ju_{n}\|\\\leq
\|x_{n}-u_{n}\|(\|x_{n}\|+\|u_{n}\|)+2\|u\|\|
 Jx_{n}-Ju_{n}\|.
\end{multline*}
It follows from (\ref{14}) and (\ref{15}) that
 \begin{equation}\lim_{n\to\infty}(\phi(u,x_{n})-\phi(u,u_{n}))=0.\end{equation}
Using (\ref{17}) and by assumption that $\liminf a_{n,0}a_{n,i}>0$
we have that
 $$\lim_{n\to\infty}g(\|Jx_{n}-Jz_{n,i}\|)=0,\quad (i\in\mathbb{N}).$$ Therefore from the
property of $g$ , we have
$$\lim_{n\to\infty}\|Jx_{n}-Jz_{n,i}\|=0,\quad (i\in\mathbb{N}).$$
Since $J^{-1}$ is uniformly norm-to-norm continuous on bounded
set, we have $$\lim_{n\to\infty}\|x_{n}-z_{n,i}\|=0,$$ this
implies that
$$\lim_{n\to\infty}dist(x_{n},P_{T_{i}}x_{n})\leq\lim_{n\to\infty}\|x_{n}-z_{n,i}\|=0,\quad
(i\in\mathbb{N}).$$ Now by closedness of $P_{T_{i}}$ we obtain that $z\in
\bigcap_{i=1}^{\infty} F(T_{i})$. By similar argument as in \cite{mt1} (see also\cite{mt2})
we obtain that $z\in EP(F)$.
 Therefore $z\in\mathcal{F}$. Finally we prove
$z=\Pi_{\mathcal{F}}x_{0}$.
 By taking limit in (\ref{18}) we have
$$\langle
z-u,Jx_{0}-Jz\rangle\geq 0,\qquad \forall u\in\mathcal{F}.$$ Hence
by Lemma \ref{2} we have $z=\Pi_{\mathcal{F}}x_{0},$ which complete the proof.
\end{proof}

By similar argument as in the proof of Theorem 3.1, we can prove the following theorem.
\begin{thm}
Let $E$ be a uniformly smooth and uniformly convex Banach space,
and let $C$ be a nonempty closed convex subset of $E$. Let $F$ be
a bifunction from $C\times C$ into $\mathbb{R}$ satisfying
$(A1)-(A4).$ Let $T_{i}:C\to N(C)$,  be a sequence of closed relatively quasi-nonexpansive multivalued
mappings such that  $\mathcal {F}=\bigcap_{i=1}^{\infty}F(T_{i})\bigcap
EP(F)\neq\emptyset$ and for all $p\in\mathcal{F}$, $T_{i}(p)=\{p\}$.  For $x_{0}\in C$ and $C_{0}=C$, let
$\{x_{n}\}$ be a sequence generated by  the following algorithm:
$$\begin {cases}
y_{n}=J^{-1}(a_{n,0}Jx_{n}+\sum_{i=1}^{\infty}a_{n,i}Jz_{n,i}),
\\u_{n}\in C :
F(u_{n},y)+\frac{1}{r_{n}}\langle
y-u_{n},Ju_{n}-Jy_{n}\rangle\geq0 ,\qquad \forall y\in C,\\
C_{n+1}=\{z\in C_{n}: \phi(z,u_{n})\leq \phi(z,x_{n})\},\\
x_{n+1}=\prod_{C_{n+1}}x, \qquad \forall n\geq 0,\end{cases}$$
 where  $\sum_{i=0}^{\infty} a_{n,i}=1$ and  $z_{n,i}\in T_{i}x_{n}$.   Assume further that  $\liminf_{n}a_{n,0}a_{n,i}>0$, $\{r_{n}\}\subset
 (0,\infty)$ and $\liminf_{n}r_{n}>0$.   Then $\{x_{n}\}$ converges strongly
 to $\Pi _{\mathcal{F}}x_{0}$, where $\Pi_{\mathcal{F}}$ is the
 projection of $E$ onto $\mathcal{F}$.
\end{thm}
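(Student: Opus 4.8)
The plan is to transcribe the proof of Theorem 3.1 almost verbatim, the only substantive change being that the proximal selection $z_{n,i}\in P_{T_i}x_n$ is replaced by an arbitrary selection $z_{n,i}\in T_ix_n$, and that the identity $P_{T_i}u=\{u\}$ for a fixed point $u$ (automatic in Theorem 3.1) is now furnished by the standing hypothesis $T_i(p)=\{p\}$ for every $p\in\mathcal{F}$. First I would prove by induction that $\mathcal{F}\subset C_n$ for all $n\geq0$; the base case is $C_0=C$. For the inductive step, I fix $u\in\mathcal{F}$ so that $T_iu=\{u\}$ for every $i$, and for each $z_{n,i}\in T_ix_n$ estimate $\phi(u,z_{n,i})=\inf_{w\in T_iu}\phi(w,z_{n,i})\leq\Phi(T_iu,T_ix_n)\leq\phi(u,x_n)$, where the middle inequality uses $z_{n,i}\in T_ix_n$ together with the definition of $\Phi$, and the last uses that $T_i$ is relatively quasi-nonexpansive. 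Expanding $\phi(u,y_n)$ with $\|Jx_n\|=\|x_n\|$, $\|Jz_{n,i}\|=\|z_{n,i}\|$ and $\sum_{i\geq0}a_{n,i}=1$, and combining with $\phi(u,u_n)=\phi(u,S_{r_n}y_n)\leq\phi(u,y_n)$ from Lemma \ref{7}, yields $\phi(u,u_n)\leq\phi(u,x_n)$, i.e. $u\in C_{n+1}$. Since each $C_n$ is closed and convex, the iterates $x_n=\Pi_{C_n}x_0$ are well defined.

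The next block is identical to Theorem 3.1 and uses nothing about the $T_i$. Using Lemma \ref{3} and $\mathcal{F}\subset C_n$ one shows $\{\phi(x_n,x_0)\}$ is bounded and nondecreasing, hence convergent; the telescoping bound $\phi(x_m,x_n)\leq\phi(x_m,x_0)-\phi(x_n,x_0)$ then forces $\phi(x_m,x_n)\to0$, so by Lemma \ref{1} the sequence $\{x_n\}$ is Cauchy and $x_n\to z\in C$. Taking $m=n+1$ gives $\|x_{n+1}-x_n\|\to0$; combining the inequality $\phi(x_{n+1},u_n)\leq\phi(x_{n+1},x_n)$ (valid because $x_{n+1}\in C_{n+1}$) with Lemma \ref{1} gives $\|x_{n+1}-u_n\|\to0$, whence $\|x_n-u_n\|\to0$, $u_n\to z$, and, by uniform norm-to-norm continuity of $J$, $\|Jx_n-Ju_n\|\to0$.

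The quantitative heart of the argument is to recover $\|x_n-z_{n,i}\|\to0$ for each fixed $i$. I would first note $\{z_{n,i}\}$ is bounded, since $(\|z_{n,i}\|-\|u\|)^2\leq\phi(u,z_{n,i})\leq\phi(u,x_n)\leq(\|x_n\|+\|u\|)^2$, and put $r=\sup_n\{\|x_n\|,\|z_{n,i}\|:i\in\mathbb{N}\}$. Applying Lemma \ref{4} in the uniformly convex dual $E^*$ to $Jx_n,Jz_{n,1},Jz_{n,2},\dots\in B_r(0)$ with weights $a_{n,0},a_{n,1},\dots$, the estimate of the first paragraph now carries an extra negative term, giving $a_{n,0}a_{n,i}\,g(\|Jx_n-Jz_{n,i}\|)\leq\phi(u,x_n)-\phi(u,u_n)$ for a continuous strictly increasing convex $g$ with $g(0)=0$. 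Since $\phi(u,x_n)-\phi(u,u_n)\leq\|x_n-u_n\|(\|x_n\|+\|u_n\|)+2\|u\|\,\|Jx_n-Ju_n\|\to0$ and $\liminf_n a_{n,0}a_{n,i}>0$, the properties of $g$ force $\|Jx_n-Jz_{n,i}\|\to0$, and uniform norm-to-norm continuity of $J^{-1}$ gives $\|x_n-z_{n,i}\|\to0$. Then $dist(x_n,T_ix_n)\leq\|x_n-z_{n,i}\|\to0$, so closedness of $T_i$ applied to $x_n\to z$ yields $z\in T_iz$, i.e. $z\in\bigcap_{i}F(T_i)$; that $z\in EP(F)$ follows exactly as in \cite{mt1}, so $z\in\mathcal{F}$. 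Passing to the limit in the characterization $\langle x_n-u,Jx_0-Jx_n\rangle\geq0$ and invoking Lemma \ref{2} then identifies $z=\Pi_{\mathcal{F}}x_0$.

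I expect the only genuine point of the argument---everything else being a transcription of Theorem 3.1---to be the closing of the estimate in the first paragraph. Without the proximal selection one can no longer appeal to $P_{T_i}u=\{u\}$, so the hypothesis $T_i(p)=\{p\}$ is precisely what converts $\inf_{w\in T_iu}\phi(w,z_{n,i})$ into $\phi(u,z_{n,i})$ and lets the chain $\phi(u,z_{n,i})\leq\Phi(T_iu,T_ix_n)\leq\phi(u,x_n)$ close for an arbitrary $z_{n,i}\in T_ix_n$. A secondary technical matter is the legitimacy of reading off the single pair $(0,i)$ from the finite-sum inequality of Lemma \ref{4} inside an infinite convex combination; this is harmless for each fixed $i$ because the discarded terms contribute only nonnegatively.
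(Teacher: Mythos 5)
Your proposal is correct and is essentially the paper's own proof: the paper proves this theorem simply by the remark ``by similar argument as in the proof of Theorem 3.1,'' and your transcription, with the hypothesis $T_i(p)=\{p\}$ standing in for the automatic identity $P_{T_i}u=\{u\}$ and closedness of $T_i$ itself replacing that of $P_{T_i}$, is exactly the intended argument. You also correctly isolate the one point where the hypothesis $T_i(p)=\{p\}$ is genuinely needed to close the estimate $\phi(u,z_{n,i})\leq\Phi(T_iu,T_ix_n)\leq\phi(u,x_n)$ for an arbitrary selection $z_{n,i}\in T_ix_n$.
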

As a result for single valued mappings we obtain the following
theorem.

\begin{thm}
Let $E$ be a uniformly smooth and uniformly convex Banach space,
and let $C$ be a nonempty closed convex subset of $E$. Let $F$ be
a bifunction from $C\times C$ into $\mathbb{R}$ satisfying
$(A1)-(A4).$ Let Let $T_{i}:C\to C$,  be a sequence of closed relatively quasi-nonexpansive
mappings such that  $\mathcal {F}=\bigcap_{i=1}^{\infty}F(T_{i})\bigcap
EP(F)\neq\emptyset$ .  For $x_{0}\in C$ and $C_{0}=C$, let
$\{x_{n}\}$ be a sequence generated by  the following algorithm:
$$\begin {cases}
y_{n}=J^{-1}(a_{n,0}Jx_{n}+\sum_{i=1}^{\infty}a_{n,i}JT_{i}x_{n}),
\\u_{n}\in C :
F(u_{n},y)+\frac{1}{r_{n}}\langle
y-u_{n},Ju_{n}-Jy_{n}\rangle\geq0 ,\qquad \forall y\in C,\\
C_{n+1}=\{z\in C_{n}: \phi(z,u_{n})\leq \phi(z,x_{n})\},\\
x_{n+1}=\prod_{C_{n+1}}x, \qquad \forall n\geq 0,\end{cases}$$
 where  $\sum_{i=0}^{\infty} a_{n,i}=1$.   Assume further that  $\liminf_{n}a_{n,0}a_{n,i}>0$,
  $\{r_{n}\}\subset (0,\infty)$ and $\liminf_{n}r_{n}>0$. Then $\{x_{n}\}$ converges strongly
 to $\Pi _{\mathcal{F}}x_{0}$, where $\Pi_{\mathcal{F}}$ is the
 projection of $E$ onto $\mathcal{F}$.
\end{thm}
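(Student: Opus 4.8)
The plan is to mirror the proof of Theorem 3.1, specialized to the single-valued setting. A single-valued relatively quasi-nonexpansive mapping $T_i\colon C\to C$ is the special case of the multivalued framework in which $T_ix=\{T_ix\}$ is a singleton, so that $P_{T_i}=T_i$ and the defining inequality $\Phi(P_{T_i}u,P_{T_i}x_n)\le\phi(u,x_n)$ collapses to $\phi(u,T_ix_n)\le\phi(u,x_n)$ for every $u\in F(T_i)$. Taking $z_{n,i}=T_ix_n$, I would first establish by induction that $\mathcal{F}\subset C_n$ for all $n\ge0$: assuming $\mathcal{F}\subset C_n$ and fixing $u\in\mathcal{F}$, I expand $\phi(u,u_n)=\phi(u,S_{r_n}y_n)\le\phi(u,y_n)$ using Lemma \ref{7}, then expand $\phi(u,y_n)$ through the duality-pairing identity together with the convexity of $\|\cdot\|^2$ on $E^*$ to obtain $\phi(u,u_n)\le a_{n,0}\phi(u,x_n)+\sum_{i=1}^{\infty}a_{n,i}\phi(u,T_ix_n)\le\phi(u,x_n)$. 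This shows $u\in C_{n+1}$, and since each $C_n$ is closed and convex, $x_{n+1}=\Pi_{C_{n+1}}x_0$ is well defined.

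Next I would extract boundedness and the Cauchy property exactly as in Theorem 3.1. From $x_n=\Pi_{C_n}x_0$ and $\mathcal{F}\subset C_n$, Lemma \ref{3} gives $\phi(x_n,x_0)\le\phi(u,x_0)$ for $u\in\mathcal{F}$, so $\{x_n\}$ is bounded; the nesting $C_{n+1}\subset C_n$ makes $\{\phi(x_n,x_0)\}$ nondecreasing, hence convergent. For $m\ge n$ one has $x_m\in C_m\subset C_n$, and Lemma \ref{3} yields $\phi(x_m,x_n)\le\phi(x_m,x_0)-\phi(x_n,x_0)\to0$, so Lemma \ref{1} shows $\{x_n\}$ is Cauchy; let $x_n\to z\in C$. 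Taking $m=n+1$ gives $\|x_{n+1}-x_n\|\to0$, and from $x_{n+1}\in C_{n+1}$ we get $\phi(x_{n+1},u_n)\le\phi(x_{n+1},x_n)\to0$, whence $\|x_{n+1}-u_n\|\to0$ and therefore $\|x_n-u_n\|\to0$ and $u_n\to z$.

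The heart of the argument is the recovery of asymptotic regularity for each $T_i$. Since $E$ is uniformly smooth, $E^*$ is uniformly convex, so Lemma \ref{4} applies to the convex combination defining $y_n$ and sharpens the earlier estimate to $\phi(u,u_n)\le\phi(u,x_n)-a_{n,0}a_{n,i}\,g(\|Jx_n-JT_ix_n\|)$ for a fixed continuous, strictly increasing $g$ with $g(0)=0$. Hence $a_{n,0}a_{n,i}\,g(\|Jx_n-JT_ix_n\|)\le\phi(u,x_n)-\phi(u,u_n)$; estimating the right-hand side by $\|x_n-u_n\|(\|x_n\|+\|u_n\|)+2\|u\|\,\|Jx_n-Ju_n\|$ and using $\|x_n-u_n\|\to0$ together with the uniform norm-to-norm continuity of $J$ shows it tends to $0$. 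With $\liminf_n a_{n,0}a_{n,i}>0$ and the properties of $g$, this forces $\|Jx_n-JT_ix_n\|\to0$, and the uniform continuity of $J^{-1}$ gives $\|x_n-T_ix_n\|\to0$ for each $i\in\mathbb{N}$. Closedness of $T_i$ then yields $z\in F(T_i)$, so $z\in\bigcap_{i=1}^{\infty}F(T_i)$; the equilibrium membership $z\in EP(F)$ follows from the standard argument built on Lemma \ref{6} using $\|u_n-x_n\|\to0$ and $\liminf_n r_n>0$. Finally, passing to the limit in the optimality inequality $\langle x_n-u,Jx_0-Jx_n\rangle\ge0$ and invoking Lemma \ref{2} identifies $z=\Pi_{\mathcal{F}}x_0$.

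The main obstacle is the manipulation of the infinite convex combination: one must first check that $\{T_ix_n\}$ is bounded uniformly in $i$, so that $r=\sup_{n,i}\{\|x_n\|,\|T_ix_n\|\}<\infty$ and the series $\sum_i a_{n,i}\|JT_ix_n\|^2$ converges; this follows from $(\|z_{n,i}\|-\|u\|)^2\le\phi(z_{n,i},u)\le\phi(x_n,u)$. Only then does the pairwise form of Lemma \ref{4}, applied to the indices $0$ and $i$, legitimately extract the single surviving term $-a_{n,0}a_{n,i}\,g(\|Jx_n-JT_ix_n\|)$ for every fixed $i$. The remaining delicate point is the rigorous verification that $z\in EP(F)$, which I would carry out in full rather than merely cite.
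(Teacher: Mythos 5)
Your proposal is correct and follows essentially the same route as the paper: the paper obtains this theorem as the single-valued specialization of Theorem 3.1 (via Theorem 3.2, where the condition $T_i(p)=\{p\}$ is automatic for single-valued mappings), with the identical chain of steps — induction $\mathcal{F}\subset C_n$, Lemmas \ref{3} and \ref{1} for the Cauchy property, Lemma \ref{4} for the estimate $a_{n,0}a_{n,i}\,g(\|Jx_n-JT_ix_n\|)\le\phi(u,x_n)-\phi(u,u_n)$, closedness of $T_i$, the Takahashi--Zembayashi argument for $z\in EP(F)$, and Lemma \ref{2} for the identification of the limit. Your added care about uniform boundedness of $\{T_ix_n\}$ over $i$ (needed to apply Lemma \ref{4} to the infinite convex combination) matches the paper's own treatment in Theorem 3.1 and fills in no more and no less than what is there.
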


\noindent {\bf Remark }: Our main result generalize the result of Eslamian and Abkar \cite{ea} of a finite family of multivalued mappings to an infinite family of multivalued mappings. We also remove the uniformly continuity of  the mappings.
\section{Some Applications to Hilbert Spaces}
In the Hilbert space setting,  we have
$$\phi(x,y)=\|x-y\|^{2},\qquad \Phi(Tx,Ty)=H(Tx,Ty)^{2}\quad \forall x,y\in H.$$ Therefore
$$\Phi(Tx,Tp)\leq \phi(x,p) \Leftrightarrow H(Tx,Tp)\leq \|x-p\|$$
for every $x\in C$ and $p\in F(T).$ We note that in a Hilbert
space $H$, $J$ is the identity operator.

\begin{thm}
Let $C$ be a nonempty closed convex subset of  a real Hilbert
space $H$. Let $F$ be a bifunction from $C\times C$ into
$\mathbb{R}$ satisfying $(A1)-(A4).$ Let $T_{i}:C\to P(C)$, $
i\in\mathbb{N}$ be a sequence of multivalued mappings such that
$P_{T_{i}}$ is  closed quasi- nonexpansive. Assume that
 $\mathcal {F}=\bigcap_{i=1}^{\infty}F(T_{i})\bigcap
EP(F)\neq\emptyset$. For $x_{0}\in C$ and $C_{0}=C$, let
$\{x_{n}\}$ be a sequences generated by  the following algorithm:
$$\begin {cases}
y_{n}=a_{n,0}x_{n}+\sum_{i=1}^{\infty}a_{n,i}z_{n,i},
\\u_{n}\in C\, such\, that\,
F(u_{n},y)+\frac{1}{r_{n}}\langle
y-u_{n},u_{n}-y_{n}\rangle\geq0 ;\qquad \forall y\in C,\\
C_{n+1}=\{z\in C_{n}: \|z-u_{n}\|\leq \|z-x_{n}\|\}, \\
x_{n+1}=P_{C_{n+1}}x, \qquad \forall n\geq 0\end{cases},
$$
where  $\sum_{i=0}^{\infty} a_{n,i}=1$ and $z_{n,i}\in P_{T_{i}}x_{n}$.  Assume further that  $\liminf_{n}a_{n,0}a_{n,i}>0$
  and $\{r_{n}\}\subset
 [a,\infty)$ for some $a>0$. Then $\{x_{n}\}$ converges strongly
 to $P _{\mathcal{F}}x_{0}$.
\end{thm}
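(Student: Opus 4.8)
The plan is to run the argument of Theorem 3.1 verbatim, specialized to the Hilbert setting. In $H$ the normalized duality map $J$ is the identity, $\phi(x,y)=\|x-y\|^2$, the generalized projection $\Pi_C$ coincides with the metric projection $P_C$, and, as recorded at the beginning of this section, a map $P_{T_i}$ is relatively quasi-nonexpansive precisely when it is quasi-nonexpansive. Consequently every lemma of Section 2 applies and the displayed identities of the proof of Theorem 3.1 collapse onto their Hilbert analogues; in particular the resolvent $S_{r_n}$ of $F$ satisfies $\|u-S_{r_n}y_n\|^2\le\|u-y_n\|^2$ for $u\in EP(F)$ by Lemmas \ref{6} and \ref{7}.

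First I would show by induction that $\mathcal F\subseteq C_n$ for every $n$. Given $u\in\mathcal F$ one has $P_{T_i}u=\{u\}$, so quasi-nonexpansiveness yields $\|u-z_{n,i}\|\le H(P_{T_i}x_n,P_{T_i}u)\le\|u-x_n\|$; feeding this into the convexity estimate $\|u-y_n\|^2\le a_{n,0}\|u-x_n\|^2+\sum_{i\ge1}a_{n,i}\|u-z_{n,i}\|^2$ for the convex combination $y_n$, and then the resolvent inequality for $u_n=S_{r_n}y_n$, gives $\|u-u_n\|\le\|u-x_n\|$, i.e. $u\in C_{n+1}$. Since each $C_{n+1}=\{z\in C_n:\|z-u_n\|\le\|z-x_n\|\}$ is the intersection of $C_n$ with a half-space, it is closed and convex and contains the nonempty set $\mathcal F$, so $x_n=P_{C_n}x_0$ is well defined. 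The characterization in Lemma \ref{2} and the estimate in Lemma \ref{3} then show, exactly as in Theorem 3.1, that $\{\|x_n-x_0\|^2\}$ is bounded and nondecreasing, hence convergent; a further application of Lemma \ref{3} to $x_m\in C_m\subseteq C_n$ for $m\ge n$ makes $\{x_n\}$ Cauchy, so $x_n\to z$ for some $z\in C$. Taking $m=n+1$ and using $x_{n+1}\in C_{n+1}$ gives $\|x_{n+1}-x_n\|\to0$ and $\|x_{n+1}-u_n\|\to0$, whence $\|x_n-u_n\|\to0$ and $u_n\to z$.

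To recover the fixed points I would apply Lemma \ref{4} (with $J$ the identity) to $y_n$, obtaining for each fixed $i$ the sharpened bound $\|u-u_n\|^2\le\|u-x_n\|^2-a_{n,0}a_{n,i}\,g(\|x_n-z_{n,i}\|)$; since $\|u-x_n\|^2-\|u-u_n\|^2\to0$ (by $\|x_n-u_n\|\to0$ and boundedness) and $\liminf_n a_{n,0}a_{n,i}>0$, the properties of $g$ force $\|x_n-z_{n,i}\|\to0$, so $\mathrm{dist}(x_n,P_{T_i}x_n)\to0$ and closedness of $P_{T_i}$ gives $z\in F(T_i)$ for every $i$. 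For the equilibrium membership I would note $\|y_n-x_n\|\to0$, so $u_n-y_n\to0$ with $r_n\ge a>0$, and run the standard limiting argument: monotonicity (A2) gives $F(y,u_n)\le\langle y-u_n,u_n-y_n\rangle/r_n$, letting $n\to\infty$ and using (A4) yields $F(y,z)\le0$, and then the convexity/upper-hemicontinuity trick with $y_t=ty+(1-t)z$ together with (A1),(A3) produces $F(z,y)\ge0$ for all $y$, so $z\in EP(F)$. Hence $z\in\mathcal F$, and passing to the limit in $\langle x_n-u,x_0-x_n\rangle\ge0$ and invoking Lemma \ref{2} identifies $z=P_{\mathcal F}x_0$.

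The main obstacle is the bookkeeping for the infinite convex combination: one must justify the termwise convexity inequality and the bound $\|y_n-x_n\|\le\sum_{i\ge1}a_{n,i}\|z_{n,i}-x_n\|\to0$ for a series rather than a finite sum, and apply Lemma \ref{4} so as to isolate a single cross term $a_{n,0}a_{n,i}g(\cdot)$ for each chosen index $i$. The equilibrium-problem limit, while standard, is the other delicate point, since it requires (A3) and (A4) in the precise order above.
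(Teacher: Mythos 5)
Your overall plan---rerunning the proof of Theorem 3.1 with $J=I$, $\phi(x,y)=\|x-y\|^{2}$, $\Pi_{C}=P_{C}$ and ``relatively quasi-nonexpansive'' $=$ ``quasi-nonexpansive''---is exactly how the paper obtains Theorem 4.1 (it gives no separate proof; it just invokes Theorem 3.1 after the Hilbert-space dictionary at the start of Section 4). Your fixed-point half is sound: the induction $\mathcal F\subseteq C_{n}$ using $P_{T_{i}}u=\{u\}$, the closedness/convexity of the sets $C_{n}$, the Cauchy argument, the application of Lemma \ref{4} to isolate the cross term $a_{n,0}a_{n,i}\,g(\|x_{n}-z_{n,i}\|)$, and the use of closedness of $P_{T_{i}}$ all go through.

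The genuine gap is in the equilibrium step, at precisely the point you flag as ``the main obstacle'' but never resolve: the claim $\|y_{n}-x_{n}\|\le\sum_{i\ge1}a_{n,i}\|z_{n,i}-x_{n}\|\to0$. At that stage you only know $\|z_{n,i}-x_{n}\|\to0$ for each \emph{fixed} $i$, and this does not control the series: the hypotheses $\liminf_{n}a_{n,0}a_{n,i}>0$ (for each $i$) and $\sum_{i\ge0}a_{n,i}=1$ allow the weights to carry non-vanishing mass on indices $i\approx n$, where nothing proved so far forces $\|z_{n,n}-x_{n}\|=\mathrm{dist}(x_{n},T_{n}x_{n})$ to be small; the convergence in $i$ is not uniform, so the step as written fails. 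There are two repairs. (1) The route the paper's Theorem 3.1 actually takes (via the Takahashi--Zembayashi argument it cites): avoid $\|y_{n}-x_{n}\|$ entirely and use Lemma \ref{7}, which in a Hilbert space reads $\|u-u_{n}\|^{2}+\|u_{n}-y_{n}\|^{2}\le\|u-y_{n}\|^{2}$ for $u\in\mathcal F$; combined with your induction estimate $\|u-y_{n}\|\le\|u-x_{n}\|$ this gives $\|u_{n}-y_{n}\|^{2}\le\|u-x_{n}\|^{2}-\|u-u_{n}\|^{2}\to0$, after which your (A2)/(A4) and (A1)+(A3) limiting argument runs verbatim. (2) Alternatively, your series bound can be salvaged, but only after you have shown $z\in\bigcap_{i}F(T_{i})$ and $x_{n}\to z$: since $P_{T_{i}}z=\{z\}$ and $P_{T_{i}}$ is quasi-nonexpansive, every $w\in P_{T_{i}}x_{n}$ satisfies $\|w-z\|\le H(P_{T_{i}}x_{n},P_{T_{i}}z)\le\|x_{n}-z\|$, hence $\|z_{n,i}-x_{n}\|\le2\|x_{n}-z\|$ \emph{uniformly} in $i$, so $\sum_{i\ge1}a_{n,i}\|z_{n,i}-x_{n}\|\le2\|x_{n}-z\|\to0$. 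Either repair closes the proof; without one of them your argument for $z\in EP(F)$ is incomplete.
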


\noindent {\bf Remark }: Theorem 4.1 holds if we assume that $T_{i}$ is closed quasi-nonexpansive  multivalued
mapping and $T_{i}(p)=\{p\}$ for all
$p\in\mathcal{F}$.


\begin{thebibliography}{99}

\bibitem{ae1}
A. Abkar, M. Eslamian, \emph{
Strong convergence theorems for equilibrium problems and fixed point problem
of multivalued nonexpansive mappings via hybrid projection method}, J. Ineq. Appl. 2012, \textbf{2012:164}, doi:10.1186/1029-242X-2012-164.

\bibitem{sz4}
N.Shahzad, N. Zegeye, \emph{Strong convergence results for
nonself multimaps in Banach spaces}, Proc. Amer. Math. Soc.
\textbf{136} (2008) 539-548.
\bibitem{esla}
M. Eslamian, \emph{Convergence theorems for nonspreading mappings and nonexpansive multivalued mappings and equilibrium problems}, Optim Lett.doi
10.1007/s11590-011-0438-4.

\bibitem{f} Y. Song, H. Wang, \emph{Convergence of
iterative algorithms for multivalued mappings in Banach spaces},
Nonlinear Anal. \textbf{70} (2009) 1547-1556.
\bibitem{g} N. Shahzad, H. Zegeye, \emph{On Mann and Ishikawa iteration schemes
for multivalued maps in Banach space}. Nonlinear Anal.,
\textbf{71} (2009) 838-844.

\bibitem{mhmd}
M. Eslamian, A. Abkar, \emph{One-step iterative process for a
finite family of multivalued mappings}, Math. Comput. Modell.,
\textbf{54} (2011) 105-111.

\bibitem{ea}
M.Eslamian, A. Abkar, \emph{Strong convergence of a multi-step iterative process for relatively
 quasi-nonexpansive multivalued mappings and equilibrium problem in Banach spaces}, Scientific Bultien, Ser A, Mathematics, (To appear )







\bibitem{r1}E. Blum, W. Oettli,\emph{ From optimization and variational
inequalities to equilibrium problems}, Math. Student, \textbf{63}
(1994) 123-145.
 \bibitem{r2} S.D. Flam, A.S. Antipin,\emph{ Equilibrium
programming using proximal-link algolithms}, Math. Program.
\textbf{78} (1997) 29-41. \bibitem{r3} A. Moudafi, M. Thera,
\emph{Proximal and dynamical approaches to equilibrium problems},
in: Lecture notes in Economics and Mathematical Systems, vol.
\textbf{477}, Springer-Verlag, New York, (1999), 187-201.

\bibitem{alber}
Y. I. Alber, \emph{Metric and generalized projection operators in
Banach spaces: properties and applications}, in Theory and
Applications of Nonlinear Operators of Accretive and Monotone
Type, vol. 178 of Lecture Notes in Pure and Applied Mathematics,
pp. 15–50, Marcel Dekker, New York, NY, USA, (1996).


\bibitem{kt}
S. Kamimura and W. Takahashi, \emph{Strong convergence of a
proximal-type algorithm in a Banach space}, SIAM Journal on
Optimization. \textbf{13} (2002) 938-945.

\bibitem{cio}I. Cioranescu, Geometry of Banach Spaces, Duality Mappings
and Nonlinear Problems, vol. 62 of Mathematics and Its
Applications, Kluwer Academic Publishers, Dordrecht, The
Netherlands, 1990.

\bibitem{tak1}
 W.Takahashi, \emph{Nonlinear Functional Analysis: Fixed Point Theory
and Its Applications}, Yokohama Publishers, Yokohama, Japan, 2000.
\bibitem{brz}
D. Butnariu, S. Reich,  A. J. Zaslavski, \emph{Asymptotic
behavior of relatively nonexpansive operators in Banach spaces},
J.Math. Anal. Appl. \textbf{7} (2001) 151-174.


\bibitem{mt0}
S. Matsushita , W.Takahashi, \emph{  Weak and strong convergence
theorems for relatively nonexpansive mappings in Banach spaces},
Fixed Point Theory Appl. \textbf{2004} (2004) 37-47.
\bibitem{mt00}
 S. Matsushita , W. Takahashi, \emph{A strong convergence theorem for
relatively nonexpansive mappings in a Banach space}, Journal of
Approximation Theory. \textbf{134} (2005)  257-266.

\bibitem{rr1}
W. Nilsrakoo, S. Saejung, \emph{ Strong convergence to common
fixed points of countable relatively quasi-nonexpansive mappings},
Fixed Point Theory  Appl. \textbf{2008}, Article ID
312454, 19 pages doi:10.1155/2008/312454.

\bibitem{rr2}
X. Qin, Y.J. Cho, S.M. Kang, Convergence theorems of common
elements for equilibrium problems and fixed point problems in
Banach spaces, J. Comput. Appl. Math. 225 (2009) 20-30.


\bibitem{wky}
Z.M. Wang, M.K. Kang, Y.J.Cho, \emph{ Convergence theorems based on the shrinking projection method for hemi-relatively nonexpansive mappings, variational inequalities and equilibrium problem}, Banach J. Math. Anal. 6 (2012),  11–34

\bibitem{mt1}
W.Takahashi, K.Zembayashi , \emph{Strong convergence theorem by a
new hybrid method for equilibrium problems and relatively
nonexpansive mappings}, Fixed Point Theory and Applications
\textbf{2008}(2008) doi:10.1155/2008/528476.
\bibitem{mt2}
W.Takahashi, K.Zembayashi , \emph{Strong and weak convergence
theorems for equilibrium problems and relatively nonexpansive
mappings in Banach spaces}, Nonlinear Anal. \textbf{70} (2009)
45-57.

\bibitem{zs2}
H.Zegeye,  N.Shahzad ,\emph{ Strong convergence theorems for
monotone mappings and relatively weak nonexpansive mappings},
Nonlinear Analysis. \textbf{70}(2009) 2707-2716.

\bibitem{h}
Simin  Homaeipour,  \emph{Weak and strong convergence
theorems for relatively nonexpansive multi-valued mappings in
Banach spaces}, Fixed Point Theory and Applications.  2011,
\textbf{2011:73}, doi:10.1186/1687-1812-2011-73.


\bibitem{ckw}
S. S. Chang , J.K. Kim , X. R. Wang,  \emph{Modified block iterative algorithm for solving convex feasibility problems in Banach
spacesm}. J Inequal Appl \textbf{2010}, 2010:14, (Article ID 869684).

\end{thebibliography}
\end{document}